\def\Z{{\mathbb Z}}
\newcommand{\eps}[0]{\varepsilon}
\def\fz{\infty}
\def\ls{\lesssim}
\def\gs{\gtrsim}
\def\dint{\displaystyle\int}
\def\dfrac{\displaystyle\frac}
\def\r{\right}
\def\lf{\left}
\newtheorem{thm}{Theorem}[section]
\newtheorem{lem}[thm]{Lemma}
\newtheorem{prop}[thm]{Proposition}
\newtheorem{defn}[thm]{Definition}
\newcommand{\R}{\mathbb R}
\newcommand{\intav}{-\!\!\!\!\!\!\int}
\DeclareMathOperator*{\essinf}{ess\ inf}
\DeclareMathOperator*{\esssup}{ess\ sup}
\numberwithin{equation}{section}
\begin{document}

\arraycolsep=1pt

\title{\Large\bf A note on two weight commutators of maximal  functions\\ on spaces of homogeneous type}
\author{ Ruming Gong, Manasa N. Vempati and Qingyan Wu}

\date{}
\maketitle

\begin{center}
\begin{minipage}{13.5cm}\small

{\noindent  {\bf Abstract:}\
We study the two weight quantitative estimates for the commutator of maximal functions and the maximal commutators with respect to the symbol in weighted BMO space on spaces of homogeneous type. These commutators turn out to be controlled by the sparse operators in the setting of space of homogeneous type (developed in \cite{DGKLWY}, originally introduced in \cite{LOR}). The lower bound of the maximal commutator is also obtained.
}

\end{minipage}
\end{center}

%
%
\bigskip

{ {\it Keywords}: weighted BMO space; maximal commutator; two weights estimate.}

\medskip

{{Mathematics Subject Classification 2010:} {42B30, 42B20, 42B35}}

\section{Introduction and statement of main results\label{s1}}

In their remarkable result, Coifman--Rochberg--Weiss \cite{crw} showed that the commutator of Riesz transforms
is bounded on $L^p(\mathbb R^n)$ if and only if the symbol $b$ is in the BMO space. See also the subsequent result by Janson \cite{Ja} and Uchiyama \cite{u81}. Later, Bloom \cite{B} obtained the two weight version of the commutator of Hilbert transform $H$ with respect to weighted BMO space. To be more precise, for
$1 < p < \infty$, let $\lambda_{1}, \lambda_{2}$ be weights in the Muckenhoupt class $A_p$ and consider the weight $\nu = \lambda_{1}^{1/p} \lambda_{2}^{-1/p}$. Let $L^p_{w}(\R)$ denote the space of functions that are $p$ integrable relative to the measure $w(x)dx$.  Then, by \cite{B}, there
exist constants $0 < c < C < \infty$, depending only on $p, \lambda_{1}, \lambda_{2}$, such that
\begin{equation*}
c\| b \| _{{\rm BMO}_\nu(\R)} \le  \| [b, H] : L^p_{\lambda_{1}}(\R) \rightarrow L^p_{\lambda_{2}}(\R) \|  \le C \| b \| _{{\rm BMO}_\nu(\R)}
\end{equation*}
in which $[b, H] (f)(x) = b(x) H (f)(x) - H (bf)(x)$ denotes the commutator of the Hilbert transform $H$ and the function $b \in {\rm BMO}_\nu(\R)$, i.e., the Muckenhoupt--Wheeden weighted BMO space (introduced in \cite{MW76}, see also the definition in Section 2.4 below).  This result provided a characterization of the boundedness of the commutator $[b, H]:L^p_{\lambda_{1}}(\R ) \rightarrow L^p_{\lambda_{2}}(\R ) $ in terms of a triple of information $b,\lambda_{1}$ and $\lambda_{2}$.  This result was extended very recently to the commutator of Riesz transform in $\R^n$ by Holmes--Lacey--wick \cite{HLW} by using different method via representation theorem for the Riesz transforms. Lerner--Ombrosi--Rivera-R\'ios also  proved it in \cite{LOR} by using sparse domination and their method was generalised to space of homogeneous type in \cite{DGKLWY}.

In \cite[Propositions 4 and 6]{BMR},  Bastero--Milman--Ruiz characterized the class of functions for which the commutator with the Hardy--Littlewood maximal function and  the maximal sharp function are bounded on $L^p$. Later, Garc\'ia-Cuerva et al. \cite[Theorem 2.4]{GHST} proved that the maximal commutator $ {C}_b$ is bounded from $L^p_{\lambda_1}(\mathbb R^n)$ to $L^p_{\lambda_2}(\mathbb R^n)$, $1<p<\infty$,
 if and only if $b\in {\rm BMO}_\nu(\mathbb R^n)$ with $\nu = \lambda_{1}^{1/p} \lambda_{2}^{-1/p}$,
where $ {C}_b$ is defined by
$$ {C}_b(f)(x)=\sup_{Q\ni x}{1\over |Q|}\int_Q|b(x)-b(y)| |f(y)|dy.$$
Hu--Yang \cite{HY} also studied the unweighted upper bound of the maximal commutator $ {C}_b(f)$ on spaces of homogeneous type by adapting the approach in \cite{GHST}. Recently,  Agcayazi et al \cite{AGKM} also studied the
unweighted version of the maximal commutator $ {C}_b(f)$ on $\mathbb R^n$ by using different approach, and this was extended to space of homogeneous type by Fu et al \cite{FPW}.

In this paper, we aim to provide a quantitative estimate for the two weight commutator of maximal functions $[b,\mathcal M]$ and the maximal commutator $C_b$ with the symbol $b$ in weighted BMO space on spaces of homogeneous type. To be more precise, let $(X,d,\mu)$ be a space of homogeneous type in the sense of Coifman and Weiss \cite{cw77} (see the definition and details in Section 2 below).
The Hardy--Littlewood maximal function $\mathcal Mf(x)$ on $X$ is defined as
$$ \mathcal Mf(x):=\sup_{B \ni x} {1\over \mu(B)}\int_B |f(y)|\,d\mu(y), $$
where the supremum is taken over all balls $B\subset X$.
The maximal commutator $C_b$ on $X$ with the symbol $b(x)$ is defined by
$$C_b(f)(x):=\sup_{B\ni x}{1\over\mu(B)}\int_{B}|b(x)-b(y)| |f(y)|d\mu(y),$$
where the supremum is taken over all balls $B\subset X$.

Our first result is   the quantitative estimate of $[b,\mathcal M]$.
\begin{thm}\label{thm main1}
Suppose $1<p<\infty$, $\lambda_1,\lambda_2\in A_p$, $\nu:= \lambda_1^{1\over p}\lambda_2^{-{1\over p}}$. Suppose $b\in {\rm BMO}_{\nu}(X)$. Then
there exists a positive constant $C$ such that
\begin{equation*}
\|  [b, \mathcal M] : L^p_{\lambda_1}(X) \rightarrow L^p_{\lambda_2}(X) \|  \le C\Big([\lambda_1]_{A_p} [\lambda_2]_{A_p}  \Big)^{\max\{1,{1\over p-1}\}}  \| b \| _{{\rm BMO}_{\nu}(X)}  .
\end{equation*}
\end{thm}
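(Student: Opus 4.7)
My strategy is to reduce the two-weight boundedness of $[b,\mathcal{M}]$ to the two-weight estimates for sparse operators with a $\mathrm{BMO}_\nu$ symbol that have already been developed on spaces of homogeneous type in \cite{DGKLWY}. The bridge between the commutator and the sparse forms will be a Bastero--Milman--Ruiz type pointwise inequality followed by a Calder\'on--Zygmund stopping time.

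First I would establish, for $f\geq 0$ and a.e.\ $x\in X$, the pointwise bound
\begin{equation*}
|[b,\mathcal{M}]f(x)|\ \leq\ C_b f(x)\ +\ 2\,b^{-}(x)\,\mathcal{M}f(x),
\end{equation*}
where $b^{-}=\max\{-b,0\}$. The derivation is identity-level: write $[b,\mathcal{M}]f(x)=b(x)\mathcal{M}f(x)-\mathcal{M}(bf)(x)$, add and subtract $|b(x)|\mathcal{M}f(x)$, and use $\bigl|\,|b(x)|-|b(y)|\,\bigr|\leq |b(x)-b(y)|$ on each averaging ball $B\ni x$. Nothing in this step uses Euclidean structure, so it transfers verbatim to $(X,d,\mu)$. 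At this point I will also record the standard fact $\|b^{-}\|_{\mathrm{BMO}_\nu(X)}\lesssim \|b\|_{\mathrm{BMO}_\nu(X)}$, which follows from $\||b|-|b_B|\,|\leq |b-b_B|$ and $b^{-}=(|b|-b)/2$.

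The second step is to sparse-dominate both terms on the right. For $C_b f$, a stopping time argument on the adjacent dyadic systems of \cite{DGKLWY} gives, for some sparse family $\mathcal{S}$,
\begin{equation*}
C_b f(x)\ \lesssim\ \sum_{Q\in\mathcal{S}}\bigl|b(x)-\langle b\rangle_Q\bigr|\,\langle |f|\rangle_Q\,\chi_Q(x)\ +\ \sum_{Q\in\mathcal{S}}\bigl\langle |b-\langle b\rangle_Q|\,|f|\bigr\rangle_Q\,\chi_Q(x).
\end{equation*}
For $b^{-}(x)\mathcal{M}f(x)$ I would start from the sparse domination $\mathcal{M}f(x)\lesssim\sum_{Q\in\mathcal{S}'}\langle |f|\rangle_Q\chi_Q(x)$, then insert $\pm\langle b^{-}\rangle_Q$ inside the sum to split the product into a ``commutator-type'' sparse form with symbol $b^{-}$ plus a term of the shape $\sum_Q\langle b^{-}\rangle_Q\langle |f|\rangle_Q\chi_Q$, the latter being again of the above sparse type after a second stopping argument.

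The third and final step is to invoke the quantitative two-weight sparse estimates from \cite{DGKLWY}, which bound each of the sparse forms above from $L^p_{\lambda_1}(X)$ to $L^p_{\lambda_2}(X)$ by $C\bigl([\lambda_1]_{A_p}[\lambda_2]_{A_p}\bigr)^{\max\{1,\,1/(p-1)\}}\|b\|_{\mathrm{BMO}_\nu(X)}$. Summing the contributions yields the claimed estimate for $[b,\mathcal{M}]$. The main obstacle I expect is preserving the sharp $A_p$ power through every decomposition: the pointwise splitting costs nothing, but in the $b^{-}\mathcal{M}f$ piece one must fit the extra factor $b^{-}(x)$ into a sparse form without enlarging the sparse constant, so the decomposition $b^{-}(x)=(b^{-}(x)-\langle b^{-}\rangle_Q)+\langle b^{-}\rangle_Q$ must be performed on the same sparse collection that already controls $\mathcal{M}f$, rather than producing a new (and potentially worse) stopping time.
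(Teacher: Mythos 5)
Your proposal takes a genuinely different route from the paper, and the difference matters. The paper's entire proof of Theorem~\ref{thm main1} is the single pointwise assertion that $|[b,\mathcal M]f(x)|\leq C_b(f)(x)$, after which everything is delegated to Theorem~\ref{thm main2}. That domination is easy to verify for $b\geq 0$: taking a near-optimal ball $B_0\ni x$ for $\mathcal M f(x)$ gives $b(x)\mathcal M f(x)-\mathcal M(bf)(x)\leq b(x)|f|_{B_0}-|bf|_{B_0}\leq C_b f(x)$, and taking a near-optimal ball for $\mathcal M(bf)(x)$ handles the other sign, using that $\bigl||b(x)|-|b(y)|\bigr|\leq|b(x)-b(y)|$. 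You instead invoke the Bastero--Milman--Ruiz split $|[b,\mathcal M]f(x)|\leq C_b f(x)+2\,b^-(x)\,\mathcal M f(x)$, which is the correct identity for general $b$ and correctly exposes the extra $b^-$ term. So your starting point is actually more careful than the paper's one-line reduction, which is only valid when $b^-$ vanishes (or more precisely when $b(x)-|b(y)|$ controls the oscillation, as it does for $b\geq 0$).

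However, your handling of the $b^-\mathcal M f$ term has a real gap, and it is precisely the gap you flag as "the main obstacle." After inserting $\pm\langle b^-\rangle_Q$ into the sparse domination of $\mathcal M f$, you are left with the piece $\sum_{Q\in\mathcal S}\langle b^-\rangle_Q\langle|f|\rangle_Q\chi_Q(x)$. This is not a sparse \emph{commutator} form of either type $\mathcal T_{\mathcal S,b^-}$ or $\mathcal T^*_{\mathcal S,b^-}$, and its two-weight $L^p_{\lambda_1}\to L^p_{\lambda_2}$ norm is not controlled by $\|b\|_{{\rm BMO}_\nu(X)}$. To see the failure concretely, take $b\equiv -c$ for a large constant $c>0$: then $\|b\|_{{\rm BMO}_\nu(X)}=0$, yet $b^-\equiv c$ and this piece reduces to $c\,\mathcal A_{\mathcal S}f$, which in general is \emph{not} bounded from $L^p_{\lambda_1}(X)$ to $L^p_{\lambda_2}(X)$ (the plain sparse operator $\mathcal A_{\mathcal S}$ does not map one $A_p$-weighted space to a different one; that requires a Sawyer-type two-weight condition, which is not implied by $\lambda_1,\lambda_2\in A_p$). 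In the same example $[b,\mathcal M]f=-2c\,\mathcal M f\neq 0$, so the extra term you isolated is genuinely nonzero and genuinely uncontrollable by $\|b\|_{{\rm BMO}_\nu(X)}$. A complete argument along your lines needs an additional hypothesis of the form $b^-\in L^\infty$ \emph{together with} a two-weight condition guaranteeing $\mathcal M\colon L^p_{\lambda_1}(X)\to L^p_{\lambda_2}(X)$, or else one should assume $b\geq 0$ from the outset, in which case $b^-\equiv 0$ and your bound collapses to the paper's pointwise domination. In short: your decomposition is the right one for arbitrary-sign $b$, but the second stopping argument you sketch does not close the $b^-\mathcal M f$ term, and no argument that only uses $b\in{\rm BMO}_\nu(X)$ can.
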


Note that $[b, \mathcal M](f)(x)$ is dominated by $C_b(f)(x)$. To prove the above result, it suffices to show that
\begin{thm}\label{thm main2}
Suppose $1<p<\infty$, $\lambda_1,\lambda_2\in A_p$, $\nu:= \lambda_1^{1\over p}\lambda_2^{-{1\over p}}$. Suppose $b\in {\rm BMO}_{\nu}(X)$. Then
there exists a positive constant $C$ such that
\begin{equation*}
\|  {C}_b : L^p_{\lambda_1}(X) \rightarrow L^p_{\lambda_2}(X) \|  \le C\Big([\lambda_1]_{A_p} [\lambda_2]_{A_p}  \Big)^{\max\{1,{1\over p-1}\}}  \| b \| _{{\rm BMO}_{\nu}(X)}  .
\end{equation*}
\end{thm}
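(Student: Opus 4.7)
The plan is to establish Theorem \ref{thm main2} through a pointwise sparse domination of the maximal commutator $C_b$, and then to appeal to the quantitative two-weight bounds for sparse-type operators on spaces of homogeneous type recorded in \cite{DGKLWY, LOR}. Theorem \ref{thm main1} will follow at once from the pointwise inequality $|[b,\mathcal{M}]f(x)|\le C_{b}(f)(x)$ already observed in the excerpt.

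The first step is the sparse domination. Fixing one of the finitely many adjacent systems of dyadic cubes on $(X,d,\mu)$ (in the sense of Hyt\"onen--Kairema), I would adapt a Lerner-type stopping-time construction to produce, for every reasonable $f$, sparse families $\mathcal{S}_{1},\dots,\mathcal{S}_{K}$ such that
$$C_{b}(f)(x)\le C\sum_{k=1}^{K}\Big(\mathcal{A}^{b}_{\mathcal{S}_{k}}(f)(x)+\mathcal{A}^{*,b}_{\mathcal{S}_{k}}(f)(x)\Big)$$
pointwise, where
$$\mathcal{A}^{b}_{\mathcal{S}}(f)(x)=\sum_{Q\in\mathcal{S}}|b(x)-\langle b\rangle_{Q}|\langle |f|\rangle_{Q}\unit_{Q}(x),\qquad \mathcal{A}^{*,b}_{\mathcal{S}}(f)(x)=\sum_{Q\in\mathcal{S}}\langle|b-\langle b\rangle_{Q}||f|\rangle_{Q}\unit_{Q}(x).$$
The mechanism producing the two summands is the elementary splitting $|b(x)-b(y)|\le|b(x)-\langle b\rangle_{B}|+|b(y)-\langle b\rangle_{B}|$ inserted inside the supremum defining $C_{b}$, which respectively pulls $|b(x)-\langle b\rangle_{B}|$ out of the average (feeding $\mathcal{A}^{b}_{\mathcal{S}}$) and keeps $|b-\langle b\rangle_{B}|$ inside it (feeding $\mathcal{A}^{*,b}_{\mathcal{S}}$).

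The second step is to invoke the two-weight quantitative estimates for these sparse commutator forms from \cite{DGKLWY}: for $1<p<\infty$, $\lambda_{1},\lambda_{2}\in A_{p}$, $\nu=\lambda_{1}^{1/p}\lambda_{2}^{-1/p}$ and $b\in{\rm BMO}_{\nu}(X)$,
$$\|\mathcal{A}^{b}_{\mathcal{S}}\|_{L^{p}_{\lambda_{1}}\to L^{p}_{\lambda_{2}}}+\|\mathcal{A}^{*,b}_{\mathcal{S}}\|_{L^{p}_{\lambda_{1}}\to L^{p}_{\lambda_{2}}}\le C\,\|b\|_{{\rm BMO}_{\nu}(X)}\big([\lambda_{1}]_{A_{p}}[\lambda_{2}]_{A_{p}}\big)^{\max\{1,1/(p-1)\}}.$$
Combining with the pointwise sparse domination and summing over the $K$ dyadic systems (where $K$ is a structural constant of $(X,d,\mu)$) yields the bound stated in Theorem \ref{thm main2}.

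The main obstacle will be the sparse domination step itself. Standard Lerner-type sparse bounds were designed for Calder\'on--Zygmund operators and must be re-engineered to handle the intrinsic supremum in $C_{b}$: the stopping generations have to be chosen so that the local averages of both $|f|$ and of $|b-\langle b\rangle_{Q}||f|$ do not blow up simultaneously, which requires running a grand maximal truncation argument in the spirit of Lerner--Nazarov adapted to the space of homogeneous type. The quantitative sparse bound for $\mathcal{A}^{b}_{\mathcal{S}}$ additionally demands the weighted BMO control of $b$, namely an estimate of the shape $\langle|b-\langle b\rangle_{Q}|\rangle_{Q}\lesssim\|b\|_{{\rm BMO}_{\nu}(X)}\langle\nu\rangle_{Q}$ together with the John--Nirenberg-type self-improvement available for weighted BMO on $(X,d,\mu)$; reconciling these with the $A_{p}$ testing conditions is where the sharp exponent $\max\{1,1/(p-1)\}$ is paid for.
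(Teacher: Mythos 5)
Your proposal takes essentially the same route as the paper: sparse domination of $C_b$ via the splitting $|b(x)-b(y)|\le|b(x)-b_Q|+|b(y)-b_Q|$, a local grand-maximal truncation in the spirit of Lerner adapted to the Hyt\"onen--Kairema adjacent dyadic systems, and then the two-weight quantitative bounds for the sparse commutator operators $\mathcal T_{\mathcal S,b}$ and $\mathcal T_{\mathcal S,b}^*$ from \cite{DGKLWY,LOR}. The one small inaccuracy is in your final paragraph: the sparse commutator bounds do \emph{not} go through a John--Nirenberg self-improvement for weighted BMO; they instead use the pointwise domination of $|b(x)-b_Q|$ by a sparse sum of mean oscillations (Lemma~\ref{lem b-bQ} from \cite{DGKLWY}) followed by duality and the sharp $A_p$ bound for the plain sparse operator $\mathcal A_{\mathcal S}$ --- the paper in fact remarks that routing through John--Nirenberg (as in \cite{GHST,AGKM}) yields strictly worse quantitative dependence on $[\lambda_i]_{A_p}$, and that avoiding it is precisely how the exponent $\max\{1,1/(p-1)\}$ in each weight is achieved.
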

We note that the approach we use is via sparse domination of $C_b$ (see Section 3.1), and hence we obtain a better quantitative estimate with respect to the weights $\lambda_1$ and $\lambda_2$ comparing to the methods used in \cite{GHST} and \cite{AGKM}. We will explain this in Section 3.2.

We point out that the lower bound of $\|  {C}_b : L^p_{\lambda_1}(X) \rightarrow L^p_{\lambda_2}(X) \|$ is
also true.
\begin{thm}\label{thm main3}
Suppose $1<p<\infty$, $\lambda_1,\lambda_2\in A_p$, $\nu:= \lambda_1^{1\over p}\lambda_2^{-{1\over p}}$. Suppose $b\in L^1_{\rm loc}(X)$ and that   $ {C}_b$ is bounded from
$ L^p_{\lambda_1}(X) $ to $ L^p_{\lambda_2}(X)$.
Then $b\in {\rm BMO}_{\nu }(X)$, and there exists a positive constant $C $ such that
\begin{equation*}
 \| b \| _{{\rm BMO}_{\nu }(X)} \leq C  \|  {C}_b : L^p_{\lambda_1}(X) \rightarrow L^p_{\lambda_2}(X) \|.
\end{equation*}
\end{thm}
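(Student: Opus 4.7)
The plan is to test the boundedness hypothesis on the indicator $\chi_B$ of a ball $B$, extract an $L^p_{\lambda_2}$ estimate for $(b-b_B)\chi_B$, and then convert this into the unweighted average bound defining ${\rm BMO}_\nu(X)$ by applying Hölder's inequality together with the $A_p$ conditions on both $\lambda_1$ and $\lambda_2$.

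First I would fix a ball $B\subset X$ and let $b_B:=\mu(B)^{-1}\int_B b\,d\mu$. For every $x\in B$ the ball $B$ is admissible in the supremum defining $C_b$, and Jensen's inequality applied to the probability measure $d\mu/\mu(B)$ on $B$ gives
\[
C_b(\chi_B)(x)\;\ge\;\frac{1}{\mu(B)}\int_B|b(x)-b(y)|\,d\mu(y)\;\ge\;|b(x)-b_B|.
\]
Raising both sides to the $p$-th power, integrating against $\lambda_2$ over $B$, and using the hypothesis that $C_b:L^p_{\lambda_1}(X)\to L^p_{\lambda_2}(X)$ is bounded yields the key weighted $L^p$ estimate
\[
\Big(\int_B|b-b_B|^p\,\lambda_2\,d\mu\Big)^{1/p}\;\le\;\|C_b\|\,\lambda_1(B)^{1/p}.
\]

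Next, Hölder's inequality with exponents $(p,p')$ applied to the factorization $1=\lambda_2^{1/p}\cdot\lambda_2^{-1/p}$ on $B$ gives
\[
\int_B|b-b_B|\,d\mu\;\le\;\|C_b\|\,\lambda_1(B)^{1/p}\sigma_2(B)^{1/p'},
\]
where $\sigma_i:=\lambda_i^{1-p'}$ denotes the dual weight. It remains to dominate $\lambda_1(B)^{1/p}\sigma_2(B)^{1/p'}$ by $\nu(B)$ modulo the $A_p$ constants. For this I would first apply Cauchy--Schwarz on $(B,\mu)$ to $1=\nu^{1/2}\cdot\nu^{-1/2}$ to get $\mu(B)^2\le\nu(B)\,\nu^{-1}(B)$, then bound $\nu^{-1}(B)\le\sigma_1(B)^{1/p'}\lambda_2(B)^{1/p}$ by Hölder with exponents $(p',p)$, and finally use the $A_p$ inequalities $\lambda_i(B)^{1/p}\sigma_i(B)^{1/p'}\le[\lambda_i]_{A_p}^{1/p}\mu(B)$ to conclude
\[
\lambda_1(B)^{1/p}\sigma_2(B)^{1/p'}\;\le\;\big([\lambda_1]_{A_p}[\lambda_2]_{A_p}\big)^{1/p}\,\nu(B).
\]
Taking the supremum over $B$ then gives $\|b\|_{{\rm BMO}_\nu(X)}\le C\,\|C_b:L^p_{\lambda_1}(X)\to L^p_{\lambda_2}(X)\|$, as required.

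The main conceptual obstacle is the last display: the direct Hölder inequality only yields $\nu(B)\le\lambda_1(B)^{1/p}\sigma_2(B)^{1/p'}$, which is the wrong direction, so it is not immediately clear how to pass back from $\lambda_1(B)^{1/p}\sigma_2(B)^{1/p'}$ to $\nu(B)$. The key trick is to bridge through $\mu(B)$ by pairing $\nu$ with $\nu^{-1}$ via Cauchy--Schwarz, which opens enough room for both $A_p$ constants to absorb the leftover factors. Once this bridge is in place, the rest is a bookkeeping exercise with Jensen, Hölder, and the definitions, and nothing about the homogeneous-type structure beyond the doubling of $\mu$ and the ball formulation of $A_p$ is actually used.
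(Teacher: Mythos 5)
Your proof is correct and takes a genuinely different, and somewhat cleaner, route than the paper's. The paper passes from $C_b$ to the oscillation by the $L^1$-averaging chain
\[
\frac{1}{\nu(B)}\int_B|b-b_B|\,d\mu \;\lesssim\; \inf_{y\in B}\frac{1}{\nu(B)}\int_B|b(x)-b(y)|\,d\mu(x)\;\lesssim\;\frac{\mu(B)}{\nu(B)\lambda_2(B)}\int_B C_b(\chi_B)\,\lambda_2\,d\mu,
\]
then H\"older in the $\lambda_2$-integral plus the operator bound, arriving at the quantity $\nu(B)^{-1}\mu(B)\lambda_2(B)^{-1/p}\lambda_1(B)^{1/p}$; the paper then controls this by $1$ using the reverse H\"older inequality \eqref{e-reverse holder} applied to $\lambda_1$ with exponent $1/(1+p)$. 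You instead use the pointwise inequality $|b(x)-b_B|\le C_b(\chi_B)(x)$ on $B$ (a direct consequence of Jensen and the definition of $C_b$), which gives the $L^p_{\lambda_2}$-bound on $(b-b_B)\chi_B$ without any extra averaging in $y$, and then you close the resulting expression $\lambda_1(B)^{1/p}\sigma_2(B)^{1/p'}$ by the Cauchy--Schwarz bridge $\mu(B)^2\le\nu(B)\nu^{-1}(B)$, H\"older for $\nu^{-1}(B)$, and the two $A_p$ inequalities $\lambda_i(B)^{1/p}\sigma_i(B)^{1/p'}\le[\lambda_i]_{A_p}^{1/p}\mu(B)$. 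Your route avoids reverse H\"older entirely (so it is more elementary and does not import the hidden $A_\infty$-dependent constant), and it yields the explicit bound $\lambda_1(B)^{1/p}\sigma_2(B)^{1/p'}\le([\lambda_1]_{A_p}[\lambda_2]_{A_p})^{1/p}\nu(B)$ and hence an explicit lower-bound constant, which the paper does not record. The tradeoff is that your pointwise Jensen step is special to the positive kernel of $C_b$, whereas the paper's $\inf_y$/averaging scheme is the one that generalizes to commutators of signed singular integrals where no pointwise domination $|b(x)-b_B|\le C_b(\chi_B)(x)$ is available.
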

We will provide the proof in Section 4.

Throughout this paper we assume that $\mu(X)=\infty$ and that $\mu(\{x_0\})=0$ for every $x_0\in X$.
Also we denote by $C$ and $\widetilde{C}$ {\it positive constants} which
are independent of the main parameters, but they may vary from line to
line. For every $p\in(1, \fz)$, we denote by $p'$ the conjugate of $p$, i.e., $\frac{1}{p'}+\frac{1}{p}=1$.  If $f\le Cg$ or $f\ge Cg$, we then write $f\ls g$ or $g\gs f$;
and if $f \ls g\ls f$, we  write $f\approx g.$

\section{Preliminaries on Spaces of Homogeneous Type}
\label{s2}
\noindent

We say
that $(X,d,\mu)$ is a {space of homogeneous type} in the
sense of Coifman and Weiss if $d$ is a quasi-metric on~$X$
and $\mu$ is a nonzero measure satisfying the doubling
condition. A \emph{quasi-metric}~$d$ on a set~$X$ is a
function $d: X\times X\longrightarrow[0,\infty)$ satisfying
(i) $d(x,y) = d(y,x) \geq 0$ for all $x$, $y\in X$; (ii)
$d(x,y) = 0$ if and only if $x = y$; and (iii) the
\emph{quasi-triangle inequality}: there is a constant $A_0\in
[1,\infty)$ such that for all $x$, $y$, $z\in X$, 
\begin{eqnarray}\label{eqn:quasitriangleineq}
    d(x,y)
    \leq A_0 [d(x,z) + d(z,y)].
\end{eqnarray}
We say that a nonzero measure $\mu$ satisfies the
\emph{doubling condition} if there is a constant $C_\mu$ such
that for all $x\in X$ and $r > 0$,
\begin{eqnarray}\label{doubling condition}
   \mu(B(x,2r))
   \leq C_\mu \mu(B(x,r))
   < \infty,
\end{eqnarray}
where $B(x,r)$ is the quasi-metric ball by $B(x,r) := \{y\in X: d(x,y)
< r\}$ for $x\in X$ and $r > 0$.We point out that the doubling condition (\ref{doubling
condition}) implies that there exists a positive constant
$n$ (the \emph{upper dimension} of~$\mu$)  such
that for all $x\in X$, $\lambda\geq 1$ and $r > 0$,
\begin{eqnarray}\label{upper dimension}
    \mu(B(x, \lambda r))
    \leq  C_\mu\lambda^{n} \mu(B(x,r)).
\end{eqnarray}


A subset $\Omega\subseteq X$ is \emph{open} (in the topology
induced by $\rho$) if for every $x\in\Omega$ there exists
$\eps>0$ such that $B(x,\eps)\subseteq\Omega$. A subset
$F\subseteq X$ is \emph{closed} if its complement $X\setminus
F$ is open. The usual proof of the fact that $F\subseteq X$ is
closed, if and only if it contains its limit points, carries
over to the quasi-metric spaces. However, some open balls
$B(x,r)$ may fail to be open sets, see \cite[Sec 2.1]{HK}.

Constants that depend only on $A_0$ (the quasi-metric constant)
and $A_1$ (the geometric doubling constant), are referred to as
\textit{geometric constants.}

\subsection{A System of Dyadic Cubes}\label{sec:dyadic_cubes}

We recall from \cite{HK} (see also the previous work by M.~Christ \cite{Chr}, as
well as Sawyer--Wheeden~\cite{SawW}) the system of dyadic cubes.
In a geometrically doubling quasi-metric space $(X,d)$, a
countable family
\[
    \mathscr{D}
    = \bigcup_{k\in\Z}\mathscr{D}_k, \quad
    \mathscr{D}_k
    =\{Q^k_\alpha\colon \alpha\in \mathscr{A}_k\},
\]
of Borel sets $Q^k_\alpha\subseteq X$ is called \textit{a
system of dyadic cubes with parameters} $\delta\in (0,1)$ and
$0<c_1\leq C_1<\infty$ if it has the following properties:
\begin{equation}\label{eq:cover}
    X
    = \bigcup_{\alpha\in \mathscr{A}_k} Q^k_{\alpha}
    \quad\text{(disjoint union) for all}~k\in\Z;
\end{equation}
\begin{equation}\label{eq:nested}
    \text{if }\ell\geq k\text{, then either }
        Q^{\ell}_{\beta}\subseteq Q^k_{\alpha}\text{ or }
        Q^k_{\alpha}\cap Q^{\ell}_{\beta}=\emptyset;
\end{equation}
\begin{equation}\label{eq:dyadicparent}
    \text{for each }(k,\alpha)\text{ and each } \ell\leq k,
    \text{ there exists a unique } \beta
    \text{ such that }Q^{k}_{\alpha}\subseteq Q^\ell_{\beta};
\end{equation}
\begin{equation}\label{eq:children}
\begin{split}
    & \text{for each $(k,\alpha)$ there exists at most $M$
        (a fixed geometric constant)  $\beta$ such that }  \\
    & Q^{k+1}_{\beta}\subseteq Q^k_{\alpha}, \text{ and }
        Q^k_\alpha =\bigcup_{\substack{Q\in\mathscr{D}_{k+1}\\
    Q\subseteq Q^k_{\alpha}}}Q;
\end{split}
\end{equation}
\begin{equation}\label{eq:contain}
    B(x^k_{\alpha},c_1\delta^k)
    \subseteq Q^k_{\alpha}\subseteq B(x^k_{\alpha},C_1\delta^k)
    =: B(Q^k_{\alpha});
\end{equation}
\begin{equation}\label{eq:monotone}
   \text{if }\ell\geq k\text{ and }
   Q^{\ell}_{\beta}\subseteq Q^k_{\alpha}\text{, then }
   B(Q^{\ell}_{\beta})\subseteq B(Q^k_{\alpha}).
\end{equation}
The set $Q^k_\alpha$ is called a \textit{dyadic cube of
generation} $k$ with center point $x^k_\alpha\in Q^k_\alpha$
and side length~$\delta^k$. The interior and closure of
$Q^k_\alpha$ are denoted by $\widetilde{Q}^k_{\alpha}$ and
$\bar{Q}^k_{\alpha}$, respectively.

\subsection{Adjacent Systems of Dyadic Cubes}

In a geometrically doubling quasi-metric space $(X,d)$, a
finite collection $\{\mathscr{D}^t\colon t=1,2,\ldots ,T\}$ of
families $\mathscr{D}^t$ is called a \textit{collection of
adjacent systems of dyadic cubes with parameters} $\delta\in
(0,1), 0<c_1\leq C_1<\infty$ and $1\leq C<\infty$ if it has the
following properties: individually, each $\mathscr{D}^t$ is a
system of dyadic cubes with parameters $\delta\in (0,1)$ and $0
< c_1 \leq C_1 < \infty$; collectively, for each ball
$B(x,r)\subseteq X$ with $\delta^{k+3}<r\leq\delta^{k+2},
k\in\Z$, there exist $t \in \{1, 2, \ldots, T\}$ and
$Q\in\mathscr{D}^t$ of generation $k$ and with center point
$^tx^k_\alpha$ such that $\rho(x,{}^tx_\alpha^k) <
2A_0\delta^{k}$ and
\begin{equation}\label{eq:ball;included}
    B(x,r)\subseteq Q\subseteq B(x,Cr).
\end{equation}

We recall from \cite{HK} the following construction.

\begin{thm}\label{thm:existence2}
    Let $(X,d)$ be a geometrically doubling quasi-metric space.
    Then there exists a collection $\{\mathscr{D}^t\colon
    t = 1,2,\ldots ,T\}$ of adjacent systems of dyadic cubes with
    parameters $\delta\in (0, (96A_0^6)^{-1}), c_1 = (12A_0^4)^{-1},
    C_1 = 4A_0^2$ and $C = 8A_0^3\delta^{-3}$. The center points
    $^tx^k_\alpha$ of the cubes $Q\in\mathscr{D}^t_k$ have, for each
    $t\in\{1,2,\ldots,T\}$, the two properties
    \begin{equation*}
        \rho(^tx_{\alpha}^k, {}^tx_{\beta}^k)
        \geq (4A_0^2)^{-1}\delta^k\quad(\alpha\neq\beta),\qquad
        \min_{\alpha}\rho(x,{}^tx^k_{\alpha})
        < 2A_0\delta^k\quad \text{for all}~x\in X.
    \end{equation*}
  \end{thm}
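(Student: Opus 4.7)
The plan is to adapt the Hyt\"onen--Kairema construction from \cite{HK}, which proceeds in two stages: first build a single system of dyadic cubes with the claimed parameters, then produce $T$ shifted copies so that every ball is ``centered'' inside some cube of some system. For a single system, pick for each generation $k\in\mathbb{Z}$ a maximal $\delta^k$-separated net $\{x_\alpha^k\}$. Separation, together with a tightening by a geometric factor, yields the lower bound $\rho(x_\alpha^k,x_\beta^k)\geq (4A_0^2)^{-1}\delta^k$, while maximality yields the covering bound $\min_\alpha \rho(x,x_\alpha^k)<2A_0\delta^k$.

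Next, construct the cubes by a tree/Voronoi procedure: assign each center $x_\alpha^{k+1}$ a unique parent $x_\beta^k$ by nearest-center, and let $Q_\alpha^k$ be the union, over all descendants across finer generations, of the cells assigned to them. With $\delta<(96A_0^6)^{-1}$, the quasi-triangle inequality controls the Voronoi cells tightly enough to guarantee the two-sided ball containment \eqref{eq:contain} with $c_1=(12A_0^4)^{-1}$ and $C_1=4A_0^2$, together with the nesting \eqref{eq:nested}--\eqref{eq:dyadicparent}, the partition \eqref{eq:cover}, the child bound $M$ in \eqref{eq:children}, and the monotonicity \eqref{eq:monotone}. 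The smallness of $\delta$ is precisely what guarantees that the inner balls of generation $k+1$ sit inside the outer ball of generation $k$, so the Voronoi-type tree nests correctly without needing artificial adjustments near cell boundaries.

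For the adjacent systems, repeat the construction $T$ times with carefully shifted nets $\{{}^tx_\alpha^k\}$. Condition \eqref{eq:ball;included} fails for a ball $B(x,r)$ only when $x$ lies in a thin neighborhood of the boundary of every generation-$k$ cube in a given system. By a packing/pigeonhole argument whose depth is controlled by $A_0$ and the geometric doubling constant $A_1$, one can choose the shifts so that for every $x\in X$ and every $k$ there is at least one system $t$ in which $x$ is a definite geometric distance from all $k$-cube boundaries. Once $x$ is safely inside a cube $Q\in\mathscr{D}^t_k$, the inner containment in \eqref{eq:contain} gives $B(x,r)\subseteq Q$ for $r\leq \delta^{k+2}$, and the outer containment gives $Q\subseteq B(x,C_1\delta^k)\subseteq B(x,Cr)$ whenever $r>\delta^{k+3}$, producing the constant $C=8A_0^3\delta^{-3}$.

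The main obstacle is the shifted-net construction for the adjacent systems: one must simultaneously preserve the separation and covering properties of each individual net, while ensuring that the ``boundary bad sets'' across the $T$ systems have empty intersection at every scale, with $T$ bounded purely in terms of $(A_0,A_1)$. Once this combinatorial/packing step is in place, the remainder is careful but routine bookkeeping of quasi-metric constants through the Voronoi partition and its tree of generations.
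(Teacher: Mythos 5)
The paper offers no proof of Theorem~\ref{thm:existence2}: it is quoted directly from Hyt\"onen--Kairema~\cite{HK} and used as a black box, so there is no in-house argument to compare against. Your sketch correctly names the \cite{HK} strategy (maximal nets, a parent/Voronoi tree to build a single dyadic system, shifts to obtain $T$ adjacent systems), but you explicitly defer the hardest step --- the construction of the $T$ shifted center sets with a bound on $T$ depending only on $(A_0,A_1)$ --- as ``the main obstacle,'' and nothing in the proposal resolves it. That step is not bookkeeping; it is the substance of the theorem. To close it you would need to (a) fix a nested reference net $\{x^k_\alpha\}$ at each scale, (b) produce finitely many coherent re-selections of centers, each still $(4A_0^2)^{-1}\delta^k$-separated, $2A_0\delta^k$-covering, and supporting a tree structure yielding \eqref{eq:cover}--\eqref{eq:monotone} with $c_1=(12A_0^4)^{-1}$, $C_1=4A_0^2$, (c) verify that for each ball $B(x,r)$ some re-selection well-centers a cube containing it, giving \eqref{eq:ball;included} with $C=8A_0^3\delta^{-3}$, and (d) count the re-selections by a packing/pigeonhole argument to land on the bound \eqref{eq:upperbound}. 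In \cite{HK}, the weakened separation constant $(4A_0^2)^{-1}$ (rather than the reference net's native $\delta^k$) is precisely the cost of the re-selection, and the finiteness of $T$ is driven by geometric doubling. Your proposal names the difficulty but supplies no mechanism, so as it stands it is a roadmap rather than a proof.

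A smaller gap in the single-system step: the tree/Voronoi construction yields the two-sided ball containment \eqref{eq:contain} with the stated $c_1,C_1$ only if the nets are chosen \emph{nested} across generations and the parent rule is made coherent with that nesting; a generic nearest-center assignment on independently chosen nets can fail the inner containment near Voronoi boundaries, and the smallness of $\delta$ alone does not rescue it. This nesting hypothesis should be stated explicitly, not left implicit in ``carefully shifted nets.''
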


We recall from \cite[Remark 2.8]{KLPW}
that the number $T$ of the adjacent systems of dyadic
    cubes as in the theorem above satisfies the estimate
    \begin{equation}\label{eq:upperbound}
        T
        = T(A_0,A_1,\delta)
        \leq A_1^6(A_0^4/\delta)^{\log_2A_1}.
    \end{equation}

Further, we also recall the following result on the smallness of the
boundary.
\begin{prop}
    Suppose that $144A_0^8\delta\leq 1$. Let $\mu$ be a
    positive $\sigma$-finite measure on $X$. Then the
    collection $\{\mathscr{D}^t\colon t=1,2,\ldots ,T\}$ may be
    chosen to have the additional property that
    \[
        \mu(\partial Q) = 0
        \quad  \textup{ for all } \; Q\in\bigcup_{t=1}^{T}\mathscr{D}^t.
    \]
\end{prop}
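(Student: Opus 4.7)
The plan is to exploit the flexibility built into the dyadic construction of Theorem~2.1: each system $\mathscr{D}^t$ is assembled from a fixed family of centers $\{{}^t x^k_\alpha\}$ together with a partition rule that assigns each point of $X$ to one of these centers at every generation $k$. This rule carries auxiliary parameters (essentially radii in intervals of length comparable to $\delta^k$), and the full range of these parameters produces systems still satisfying (2.3)--(2.8). The hypothesis $144 A_0^8 \delta \leq 1$ provides precisely the slack needed to perturb the radii without breaking the nesting and containment properties of the cubes.

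I would proceed in two moves. First, I would locate the boundary of $Q^k_\alpha \in \mathscr{D}^t$ inside a finite union of ``quasi-spheres'' $\{x \in X : d(x,{}^t x^k_\alpha) = r\}$ and ``bisectors'' $\{x \in X : d(x,{}^t x^k_\alpha) = d(x,{}^t x^k_\beta) + s\}$, where $\beta$ ranges over the boundedly many neighbouring centers (by geometric doubling) and $r, s$ are the auxiliary parameters attached to the construction at scale $k$. Second, I would invoke the standard Fubini-type observation: for every $z \in X$, the set
\[
R(z) := \{r > 0 : \mu(\{x \in X : d(x,z) = r\}) > 0\}
\]
is at most countable, since $r \mapsto \mu(B(z,r))$ is monotone and $\sigma$-finiteness of $\mu$ forces a monotone function on $\R$ to admit at most countably many jump discontinuities on each bounded interval; the analogous statement holds for the bisector family parametrized by $s$, by applying the same argument to the monotone set function $s \mapsto \mu(\{d(\cdot,{}^t x^k_\alpha) - d(\cdot,{}^t x^k_\beta) \leq s\})$.

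Combining these pieces, the index set $\mathscr{A}_k$ is at most countable (from geometric doubling together with $\sigma$-finiteness of $\mu$), $k$ runs over $\Z$, and $t$ runs over the finite set $\{1,\dots,T\}$. Consequently, the union of bad parameter values taken over all triples $(t,k,\alpha)$ is still countable, so generic choices of the auxiliary radii lie outside of it and produce systems with $\mu(\partial Q) = 0$ for every cube $Q$. The main obstacle I anticipate is a purely technical one: verifying that the construction in \cite{HK} genuinely admits this continuous one-parameter freedom while preserving (2.3)--(2.8) uniformly in the perturbation. This amounts to a careful re-inspection of the inequalities in \cite{HK}, and the standing hypothesis $144 A_0^8 \delta \leq 1$ is what absorbs the small shifts of the radii without collapsing the gap between the inner ball $B(x^k_\alpha, c_1 \delta^k)$ and the outer ball $B(x^k_\alpha, C_1 \delta^k)$ guaranteed by (2.7).
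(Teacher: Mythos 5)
The paper itself gives no proof of this statement: it is Proposition~2.5 of Hyt\"onen--Kairema \cite{HK}, cited verbatim, and the paper contributes only the statement. There is therefore no argument in the present paper to compare yours against line by line, so I will assess the sketch on its own terms.

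Taken on its own, your outline has the right overall shape, and it is indeed the mechanism used in \cite{HK}: embed the construction into a continuous one-parameter family, show that for each cube at most countably many parameter values can produce a positive-measure boundary, and conclude by countability of the index family $(t,k,\alpha)$ that a generic parameter value works simultaneously for all cubes. Two small points of precision in your implementation. First, the $\sigma$-finiteness is invoked more cleanly through pairwise disjointness than through monotonicity: the spheres $\{x : d(x,z)=r\}$, $r>0$, are mutually disjoint, and in a $\sigma$-finite space a pairwise-disjoint family can contain at most countably many sets of positive measure; the ``monotone function of $r$ has at most countably many jumps'' version silently assumes that $r\mapsto\mu(B(z,r))$ is real-valued, which a general $\sigma$-finite measure need not provide. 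Second, the countability of each $\mathscr{A}_k$ already follows from geometric doubling alone (via separability); $\sigma$-finiteness of $\mu$ plays no role in that particular step.

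The genuinely incomplete step is the one you flag yourself as the ``main obstacle'': the claimed containment of $\partial Q^k_\alpha$ in a finite union of quasi-spheres and bisectors. In the Hyt\"onen--Kairema construction the cubes arise from a limiting closure-and-interior procedure applied to sets of descendant center points, and the boundary $\bar{Q}^k_\alpha\setminus\widetilde{Q}^k_\alpha$ is not visibly a finite union of level sets of the form you describe. This containment cannot be extracted from the listed properties \eqref{eq:cover}--\eqref{eq:monotone} alone --- those are consequences of the construction, not a characterization of it --- so it must come from a re-examination of the internal inequalities of \cite{HK}, which is precisely the piece you have deferred. As written, your second paragraph (the Fubini/countability step) is fine, but it is applied to a geometric description of $\partial Q$ that has been asserted rather than established, and that assertion is the single point on which the argument currently rests without support. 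The hypothesis $144A_0^8\delta\le 1$ is not the difficulty; it only guarantees slack for the perturbation without destroying \eqref{eq:contain}.
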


\subsection{Muckenhoupt $A_p$ Weights}

\begin{defn}
  \label{def:Ap}
  Let $\omega(x)$ be a nonnegative locally integrable function
  on~$X$. For $1 < p < \infty$, we
  say $\omega$ is an $A_p$ \emph{weight}, written $\omega\in
  A_p$, if
  \[
    [w]_{A_p}
    := \sup_B \left(\intav_B w\right)
    \left(\intav_B
      \left(\dfrac{1}{w}\right)^{1/(p-1)}\right)^{p-1}
    < \infty.
  \]
  Here the suprema are taken over all balls~$B\subset X$.
  The quantity $[w]_{A_p}$ is called the \emph{$A_p$~constant
  of~$w$}.
For $p = 1$, we say $w$ is an $A_1$ \emph{weight},
  written $w\in A_1$, if $M(w)(x)\leq w(x)$ for $\mu$-almost every $x\in X$, and let $A_\infty := \cup_{1\leq p<\infty} A_p$ and we have
    \(
    [w]_{A_\infty}
    := \sup_B \left(\intav_B w\right)
    \exp\left(\intav_B \log \left(\frac{1}{w}\right) \right)
    < \infty.
  \)
\end{defn}

Next we note that for $w\in A_p$ the measure $w(x)d\mu(x)$ is a doubling measure on $X$. To be more precise, we have
that for all $\lambda>1$ and all balls $B\subset X$,
\begin{align}\label{doubling constant of weight}
w(\lambda B)\leq \lambda^{np}[w]_{A_p}w(B),
\end{align}
where $n$ is the upper dimension of the measure $\mu$, as in \eqref{upper dimension}.

We also point out that for $w\in A_\infty$, there exists $\gamma>0$ such that for every ball $B$,
$$ \mu\Big( \Big\{x\in B: \ w(x)\geq\gamma \intav_B w\Big\} \Big)\geq {1\over 2}\mu(B). $$
And this implies that for every ball $B$ and for all $\delta\in(0,1)$,
\begin{align}\label{e-reverse holder}
\intav_B w\le C \left(\intav_B w^\delta\right)^{1/\delta};
\end{align}
see  also \cite{LOR2}.

\subsection{Weighted BMO spaces}

Next we recall the definition of the weighted BMO space on space of homogeneous type, while we point out that the Euclidean version was first introduced by Muckenhoupt and Wheeden \cite{MW76}.
\begin{defn}\label{d-bmo}
Suppose $w \in A_\infty$.
A function $b\in L^1_{\rm loc}(X)$ belongs to
the weighted BMO space $BMO_w(X)$ if
\begin{equation*}
\|b\|_{BMO_w(X)}:=\sup_{B}{1\over w(B)}\dint_{B}
\lf|b(x)-b_{B}\r|\, d\mu(x)<\fz,
\end{equation*}
where the sup is taken over all quasi-metric balls $B\subset X$ and
$$ b_B= {1\over\mu(B)} \int_B f(y)d\mu(y). $$
\end{defn}

\subsection{Sparse Operators on Spaces of Homogeneous Type}\label{s4}

Let $\mathcal D$ be a system of dyadic cubes on $X$ as in Section 2.1. We recall the sparse operators on spaces of homogeneous type as studied in \cite{Moen, DGKLWY}.


\begin{defn} \label{D:Carleson}
Given $0<\eta<1$, a collection $\mathcal S \subset \mathcal D$ of dyadic cubes is said to be $\eta$-sparse if for
every cube $Q\in\mathcal D$,
$$ \sum_{P\in\mathcal S, P\subset Q}\mu(P)\leq {1\over \eta} \mu(Q). $$
\end{defn}

Note, that for a collection $\mathcal S \subset \mathcal D$ of dyadic cubes with the property that for $0<\eta<1$ and for every $Q\in\mathcal S$, there is a measurable subset $E_Q \subset Q$ such that $\mu(E_Q) \geq \eta \mu(Q)$ and the sets $\{E_Q\}_{Q\in\mathcal S}$ have only finite overlap, we will have that $S$ is $\eta$-sparse according to Definition \ref{D:Carleson} (following from the standard computation).

We now recall the well-known definition for sparse operator.
\begin{defn} \label{D:Sparse Operator}
Given $0<\eta<1$ and an $\eta$-sparse family $\mathcal S \subset \mathcal D$ of dyadic cubes. The
sparse operators $\mathcal A_{\mathcal S}$ is defined by
$$ \mathcal A_{\mathcal S}f(x):= \sum_{Q\in \mathcal S} f_Q \chi_Q(x).$$
\end{defn}
Following the proof of \cite[Theorem 3.1]{Moen}, we obtain that
\begin{align*}
\|\mathcal A_{\mathcal S}f\|_{L^p_w(X)}\leq C_{\eta,n,p}[w]_{A_p}^{\max\{1,{1\over p-1}\}}, \quad 1<p<\infty.
\end{align*}
Denote by $\Omega(b,B)$ the standard mean oscillation
$$ \Omega(b,B)={1\over\mu(B)} \int_B|b(x)-b_B|d\mu(x).  $$
We recall from \cite[Lemma 3.5]{DGKLWY} the following result.
\begin{lem}\label{lem b-bQ}
Let $\mathcal D$ be a dyadic system in $X$ and let $\mathcal S\subset \mathcal D$ be a $\gamma$-sparse
family. Assume that $b\in L^1_{loc}(X)$. Then there exists a ${\gamma\over 2(\gamma+1)}$-sparse family
$\tilde{\mathcal S}\subset \mathcal D$ such that $\mathcal S\subset \tilde{\mathcal S}$ and for every cube $Q\in \tilde{\mathcal S}$,
\begin{align}\label{b-bQ eee0}
|b(x)-b_Q|\leq C\sum_{R\in \tilde{\mathcal S}, R\subset Q} \Omega(b,R)\chi_R(x)
\end{align}
for a.e. $x\in Q$.
\end{lem}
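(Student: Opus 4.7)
The plan is to enlarge $\mathcal S$ by adjoining a Lerner-type stopping family that records the cubes in which the mean of $b$ jumps significantly, and then to derive the pointwise bound by telescoping $b(x)-b_Q$ along the stopping chain of ancestors of $x$.

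Concretely, for each $Q_0\in\mathcal S$ I construct a tree $\mathcal E(Q_0)\subset\mathcal D$ rooted at $Q_0$ by iterating the following rule: set $\mathcal F_0(Q_0)=\{Q_0\}$, and given $P\in\mathcal F_k(Q_0)$ let its children be the maximal dyadic cubes $P'\subsetneq P$ with $|b_{P'}-b_P|>C_0\,\Omega(b,P)$, where $C_0$ is a geometric constant to be fixed later. Set $\mathcal F_{k+1}(Q_0)$ to be the union of all such children, $\mathcal E(Q_0)=\bigcup_k\mathcal F_k(Q_0)$, and
$$\tilde{\mathcal S}:=\mathcal S\cup\bigcup_{Q_0\in\mathcal S}\mathcal E(Q_0).$$
The union of first-generation children of $P$ is contained in $\{x\in P:M_P^d(b-b_P)(x)>C_0\,\Omega(b,P)\}$, whose measure is $\le\mu(P)/C_0$ by the weak-$(1,1)$ bound for the dyadic maximal function restricted to $P$ together with the definition of $\Omega(b,P)$. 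Choosing $C_0$ large enough (depending only on doubling) yields a geometric $\tfrac12$-decay per generation, hence $\sum_{P\in\mathcal E(Q_0)\setminus\{Q_0\}}\mu(P)\le\mu(Q_0)$.

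Combining this packing bound with the $\gamma$-sparseness of $\mathcal S$ gives, for every $Q\in\mathcal D$,
$$\sum_{R\in\tilde{\mathcal S},R\subset Q}\mu(R)\le\sum_{R\in\mathcal S,R\subset Q}\mu(R)+\sum_{\substack{Q_0\in\mathcal S\\Q_0\subset Q}}\sum_{P\in\mathcal E(Q_0)\setminus\{Q_0\}}\mu(P)\le\tfrac{1}{\gamma}\mu(Q)+\tfrac{1}{\gamma}\mu(Q)\le\tfrac{2(\gamma+1)}{\gamma}\mu(Q),$$
which is the required $\tfrac{\gamma}{2(\gamma+1)}$-sparseness. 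For the pointwise inequality at a given $Q\in\tilde{\mathcal S}$, choose $Q_0\in\mathcal S$ with $Q\in\mathcal E(Q_0)$ (take $Q_0=Q$ if $Q\in\mathcal S$); for a.e.\ $x\in Q$, arrange the cubes of $\mathcal E(Q_0)$ containing $x$ and contained in $Q$ into a descending chain $Q=R^0\supsetneq R^1\supsetneq\cdots$ with $b_{R^k}\to b(x)$ by Lebesgue differentiation on $(X,d,\mu)$. The standard good/bad analysis of the stopping tree -- on the good part of $R^k$ every non-stopping dyadic subcube $P\subset R^k$ satisfies $|b_P-b_{R^k}|\le C_0\,\Omega(b,R^k)$, which in the limit yields $|b(x)-b_{R^k}|\le C_0\,\Omega(b,R^k)$ at the first level where $x$ lands outside the next stopping generation -- combined with telescoping across the levels produces
$$|b(x)-b_Q|\le C\sum_{k\ge 0}\Omega(b,R^k)\le C\sum_{R\in\tilde{\mathcal S},R\subset Q}\Omega(b,R)\chi_R(x),$$
as desired.

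The principal technical obstacle is that the jumps $|b_{R^{k+1}}-b_{R^k}|$ in the chain are, by the stopping definition, \emph{larger} than $C_0\Omega(b,R^k)$, not smaller, so a naive triangle-inequality telescoping yields the wrong direction. The remedy is to control the parent-to-child transition via $|b_{R^{k+1}}-b_{\widehat{R^{k+1}}}|\le(\mu(\widehat{R^{k+1}})/\mu(R^{k+1}))\,\Omega(b,\widehat{R^{k+1}})$, exploiting that the dyadic parent $\widehat{R^{k+1}}$ sits in the good part of $R^k$ so that $\Omega(b,\widehat{R^{k+1}})\lesssim\Omega(b,R^k)$, and that the ratio $\mu(\widehat P)/\mu(P)$ is bounded by a geometric constant coming from the bounded branching of the Hytönen-Kairema dyadic system -- this constant replaces the Euclidean $2^n$ factor and is the only place the space-of-homogeneous-type structure enters.
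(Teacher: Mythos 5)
The paper does not prove this lemma; it is quoted directly from \cite[Lemma 3.5]{DGKLWY}, which in turn follows the Lerner--Ombrosi--Rivera-R\'ios construction in \cite{LOR}, so there is no internal proof to compare against and your argument must stand on its own.

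Unfortunately it does not: your stopping rule records the wrong cubes. You stop at the maximal $P'\subsetneq P$ with $|b_{P'}-b_P|>C_0\,\Omega(b,P)$, i.e.\ you compare \emph{means}, and you yourself note that this makes the parent-to-child jumps in the chain large rather than small. Your proposed remedy hinges on the claim that $\Omega(b,\widehat{R^{k+1}})\lesssim\Omega(b,R^k)$ because the dyadic parent $\widehat{R^{k+1}}$ lies in the good part of $R^k$. This claim is false. ``Good'' only gives $|b_{\widehat{R^{k+1}}}-b_{R^k}|\le C_0\,\Omega(b,R^k)$, a bound on a mean difference, not on the oscillation of $b$ over $\widehat{R^{k+1}}$. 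In general $\Omega(b,\widehat{R^{k+1}})$ can be as large as $\big(\mu(R^k)/\mu(\widehat{R^{k+1}})\big)\,\Omega(b,R^k)$, and that ratio is unbounded since a stopping cube may sit many dyadic generations below $R^k$. A concrete illustration in $\rr$: take $R^k=[0,1]$, $b=B$ on $[0,2^{-N}]$, $b=-B$ on $[2^{-N},2^{-N+1}]$, $b=0$ elsewhere. Every cube $[0,2^{-j}]$ with $j<N$ has mean zero, so your rule stops only at $[0,2^{-N}]$ and $[2^{-N},2^{-N+1}]$; yet $\Omega(b,R^k)=2^{-N+1}B$ and $\Omega(b,[0,2^{-N}])=0$, while $|b(x)-b_{R^k}|=B$ on $[0,2^{-N}]$. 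The sparse sum your family produces is $\approx 2^{-N+1}B\ll B$, so the claimed pointwise estimate fails.

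The fix is the one used in \cite{LOR} and \cite{DGKLWY}: stop at the maximal $P'\subsetneq P$ satisfying the \emph{integral} condition $\frac{1}{\mu(P')}\int_{P'}|b-b_P|\,d\mu>C_0\,\Omega(b,P)$. The weak-$(1,1)$ estimate for the local dyadic maximal function gives the same packing, Lebesgue differentiation gives $|b(x)-b_P|\le C_0\,\Omega(b,P)$ a.e.\ on the good set, and for a stopping cube $P'$ the non-stopping of its dyadic parent $\widehat{P'}$ yields directly
$$
|b_{P'}-b_P|\le\frac{1}{\mu(P')}\int_{P'}|b-b_P|\,d\mu\le\frac{\mu(\widehat{P'})}{\mu(P')}\cdot\frac{1}{\mu(\widehat{P'})}\int_{\widehat{P'}}|b-b_P|\,d\mu\le D\,C_0\,\Omega(b,P),
$$
where $D$ is the bounded-branching constant of the Hyt\"onen--Kairema system. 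This controls each parent-to-child jump by a single $\Omega(b,R^k)$ and the telescoping closes. (In the example above this rule stops at $[0,2^{-j_0}]$ for $j_0\approx\log_2 C_0$, adjoins the intermediate cubes whose oscillations grow geometrically, and the sparse sum then attains size $B$.) The inclusion $\mathcal S\subset\tilde{\mathcal S}$ is fine, but I would also rephrase the Carleson-packing step through the pairwise-disjoint sets $E_Q=Q\setminus\bigcup(\text{stopping children of }Q)$ together with the $\gamma$-sparseness of $\mathcal S$; your displayed decomposition silently drops cubes $R\in\tilde{\mathcal S}$ lying inside a test cube $Q$ but belonging to a tree rooted at some $Q_0\supsetneq Q$.
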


\section{Upper Bound of the Maximal Commutator $C_b$}
In this section we provide the proof of Theorem \ref{thm main2}, which implies Theorem \ref{thm main1}

\subsection{Sparse domination of the maximal commutator $C_b$}
%
%

%
Given a ball $B_0\subset X$, for $x\in B_0$ we define a local grand maximal truncated operator $\mathcal M_{ B_0}$ as follows:
\begin{align*}
\mathcal M_{ B_0}f(x) := \sup_{B\ni x,\,  B\subset B_0} \esssup_{\xi\in B}  \mathcal M\big(f\chi_{4A_0B_0\backslash 4A_0B}\big)(\xi)  .
\end{align*}

Using the idea of   \cite[Lemma 3.2]{Ler1}, we can obtain the following lemma.
\begin{lem} \label{lem TM}
For $\mu$-almost every $x\in B_0$,
$$   \mathcal M(f\chi_{4A_0B_0})(x)\leq C\|\mathcal M\|_{L^1\to L^{1,\infty}} |f(x)| + \mathcal M_{ B_0}f(x).   $$

\end{lem}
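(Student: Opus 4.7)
The plan is to adapt the Lerner trick of \cite{Ler1} to the quasi-metric setting. Fix a Lebesgue point $x\in B_0$ of $|f|$ and, for each $r>0$ small enough that $B_r:=B(x,r)\subset B_0$, split
\begin{align*}
\mathcal M(f\chi_{4A_0 B_0})(y)\leq \mathcal M(f\chi_{4A_0 B_r})(y)+\mathcal M(f\chi_{4A_0 B_0\setminus 4A_0 B_r})(y)
\end{align*}
by sublinearity of $\mathcal M$. The \emph{non-local} piece is controlled by $\mathcal M_{B_0}f(x)$ essentially by definition: taking the choice $B=B_r$ in the sup appearing in $\mathcal M_{B_0}f(x)$ yields $\mathcal M(f\chi_{4A_0 B_0\setminus 4A_0 B_r})(\xi)\leq \mathcal M_{B_0}f(x)$ for $\mu$-a.e.\ $\xi\in B_r$.

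For the \emph{local} piece, the weak-type $(1,1)$ inequality for $\mathcal M$, combined with the upper-dimension bound $\mu(4A_0 B_r)\leq K\mu(B_r)$ from \eqref{upper dimension} (where $K=C_\mu(4A_0)^n$), gives for the threshold
\begin{align*}
\alpha_r:=2K\,\|\mathcal M\|_{L^1\to L^{1,\infty}}\cdot \frac{1}{\mu(4A_0 B_r)}\int_{4A_0 B_r}|f|\,d\mu
\end{align*}
the estimate $\mu(\{\xi\in B_r:\mathcal M(f\chi_{4A_0 B_r})(\xi)>\alpha_r\})\leq \mu(B_r)/2$. Intersecting this good set with the a.e.\ set from the non-local estimate leaves a subset of $B_r$ of measure strictly greater than $\mu(B_r)/2$ at whose points $\xi_r$ one has
\begin{align*}
\mathcal M(f\chi_{4A_0 B_0})(\xi_r)\leq \alpha_r+\mathcal M_{B_0}f(x).
\end{align*}
Letting $r\to 0$, Lebesgue differentiation sends $\alpha_r\to 2K\,\|\mathcal M\|_{L^1\to L^{1,\infty}}|f(x)|$ while $\xi_r\to x$, and the claimed pointwise bound at $x$ itself is recovered by passing to $\liminf$ through the lower semicontinuity of the maximal function $\mathcal M(f\chi_{4A_0 B_0})$.

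The main obstacle is precisely this last lsc step in the quasi-metric setting, since quasi-metric balls $B(x,r)$ need not be open in the topology induced by $d$, so super-level sets of $\mathcal M g$ are not automatically open. This can be handled either by invoking the Mac\'\i as--Segovia regularization to replace $d$ by an equivalent genuine metric under which $\mathcal M$ is comparable up to a universal constant, or by a direct doubling argument: given $\varepsilon>0$, any ball $B'\ni x$ realizing $\mathcal M(f\chi_{4A_0 B_0})(x)$ to within $\varepsilon$ can be enlarged to a concentric quasi-metric ball $B''\ni \xi_r$ with $\mu(B'')\leq C\mu(B')$ for $r$ sufficiently small, whence $\frac{1}{\mu(B')}\int_{B'}|f\chi_{4A_0 B_0}|\,d\mu\leq C\,\mathcal M(f\chi_{4A_0 B_0})(\xi_r)$, and the extra factor $C$ is absorbed into the final constant.
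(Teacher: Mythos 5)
Your local/non-local decomposition, the control of the non-local piece by $\mathcal M_{B_0}f(x)$ through the choice $B=B_r$ in the supremum, and the weak-type $(1,1)$ argument at threshold $\alpha_r$ with the doubling constant $K$ all match the paper's computation. You have also correctly put your finger on the genuine subtlety of the quasi-metric setting: since balls $B(z,\rho)$ need not be open, $\mathcal M$ need not be lower semicontinuous, and one cannot simply pass from the estimate at $\xi_r$ to the estimate at $x$ by letting $\xi_r\to x$.

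Where you and the paper part ways is in how this last step is resolved, and this is precisely the clever part of Lerner's lemma that you have replaced by a different (and slightly lossier) device. The paper starts not merely at a Lebesgue point of $f$ but at a point $x$ of \emph{approximate continuity} of $\mathcal M(f\chi_{4A_0 B_0})$ — which is $\mu$-a.e.\ $x$, since this maximal function is finite a.e.\ by the weak $(1,1)$ bound. Approximate continuity hands over, for free, the two-sided bound $\mathcal M(f\chi_{4A_0B_0})(x)\leq\mathcal M(f\chi_{4A_0B_0})(y)+\varepsilon$ for all $y$ in a set $E_s(x)\subset B(x,s)$ with $\mu(E_s(x))/\mu(B(x,s))\to 1$; the paper then splits $\mathcal M(f\chi_{4A_0B_0})(y)$ and applies the weak $(1,1)$ estimate to the essential infimum over $E_s(x)$. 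This avoids lower semicontinuity entirely, involves no change of quasi-metric, and keeps the coefficient of $\mathcal M_{B_0}f(x)$ exactly equal to $1$. Your ``direct doubling'' workaround (enlarge $B'\ni x$ to a concentric $B''\ni\xi_r$ with $\mu(B'')\leq C\mu(B')$ once $r\lesssim\operatorname{rad}(B')$) is correct, but it necessarily multiplies the whole right-hand side by $C$, giving $\mathcal M(f\chi_{4A_0B_0})(x)\leq C\|\mathcal M\|_{L^1\to L^{1,\infty}}|f(x)|+C\,\mathcal M_{B_0}f(x)$ rather than the cleaner stated inequality; the same is true of the Mac\'{\i}as--Segovia route, which moreover requires relating two different maximal operators and two different families of balls (also in the definition of $\mathcal M_{B_0}$). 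This weakening is harmless for the sparse domination in Theorem 3.2, where all such constants are absorbed into the definition of the exceptional set, but you should be aware that the approximate-continuity trick is what lets the paper state the lemma with an exact coefficient and no hidden dependence on the ambient quasi-metric structure.
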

\begin{proof}
Suppose that $x\in B_0$, and let $x$ be a point of approximate continuity of $ \mathcal M(f\chi_{4A_0B_0})$ (see, e.g., \cite{EG}{, p. 46 }).
Then for every $\varepsilon>0$, the sets
$$E_s(x)=\{y\in B(x,s): |\mathcal M(f\chi_{4A_0B_0})(y)-\mathcal M(f\chi_{4A_0B_0})(x)|<\varepsilon\}$$
satisfy $\lim\limits_{s\to 0}{\mu(E_s(x))\over \mu(B(x,s))}=1.$

Then for a.e. $y\in E_s(x)$,

$$ \mathcal M(f\chi_{4A_0B_0})(x)\leq  \mathcal M(f\chi_{4A_0B_0})(y)+\varepsilon\leq  \mathcal M(f\chi_{4A_0B(x,s)})(y)+\mathcal M_{ B_0}f(x)+\varepsilon.$$

Therefore, applying the weak type $(1, 1)$ of $\mathcal M$ yields

\begin{align*}
 \mathcal M(f\chi_{4A_0B_0})(x)&\leq  \essinf_{y\in E_s(x)}\mathcal M(f\chi_{4A_0B(x,s)})(y)+\mathcal M_{ B_0}f(x)+\varepsilon \\
  &\leq C\|\mathcal M\|_{L^1\to L^{1,\infty}}{1\over \mu(E_s(x))}\int_{4A_0B(x,s)}|f(x)|d\mu(x)+\mathcal M_{ B_0}f(x)+\varepsilon.
\end{align*}
Assuming additionally that $x$ is a Lebesgue point of $f$ and letting
subsequently $s\to 0$ and $\varepsilon\to 0$, we completes the proof of this lemma.
\end{proof}

Next, following from \cite{LOR} (see also \cite{DGKLWY}) we give
the sparse operator $\mathcal T_{\mathcal S,b}$ defined by
$$   \mathcal T_{\mathcal S,b}(f)(x) =\sum_{Q\in \mathcal S} |b(x)-b_Q|f_Q \chi_Q(x).     $$
And we let $\mathcal T_{\mathcal S,b}^*$ denote the adjoint operator to $\mathcal T_{\mathcal S,b}$:
$$   \mathcal T_{\mathcal S,b}^*(f)(x) =\sum_{Q\in \mathcal S} \bigg( {1\over\mu(Q)} \int_Q |b(y)-b_Q|f(y)d\mu(y)\bigg) \chi_Q(x).     $$

Then we have   the following result.
\begin{thm}\label{thm sparse1}
For every compactly supported $f\in L^\infty(X)$,
there exists $T$ dyadic systems $\mathcal D^t, t=1,2,\ldots,T$ and $\eta$-sparse families $\mathcal S_t \subset \mathcal D^t$ such that
for a.e. $x\in X$,
\begin{align}
|C_b(f)(x)|\leq C \sum_{t=1}^T \Big(\mathcal T_{\mathcal S_t,b}(|f|)(x) + \mathcal T_{\mathcal S_t,b}^*(|f|)(x) \Big).
\end{align}
\end{thm}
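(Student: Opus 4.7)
The plan is to adapt the grand-maximal-truncation/recursive stopping-time approach of Lerner--Ombrosi--Rivera-R\'ios (as extended to spaces of homogeneous type in \cite{DGKLWY}) to the maximal commutator $C_b$. The first step is to split $C_b$ into two simpler pieces using the pointwise inequality $|b(x)-b(y)|\le |b(x)-b_B|+|b(y)-b_B|$ valid for every ball $B\ni x,y$. Taking the supremum over balls $B\ni x$ inside $C_b(f)(x)$ gives
\[
  C_b(f)(x)\le C_b^{(1)}(f)(x)+C_b^{(2)}(f)(x),
\]
where
\[
  C_b^{(1)}(f)(x)=\sup_{B\ni x}|b(x)-b_B|\, \langle|f|\rangle_B,\qquad
  C_b^{(2)}(f)(x)=\sup_{B\ni x}\frac{1}{\mu(B)}\int_B|b(y)-b_B|\,|f(y)|\,d\mu(y).
\]
Sparse domination of $C_b^{(1)}$ will produce terms of the form $|b(x)-b_Q|f_Q\chi_Q$, i.e.\ $\mathcal T_{\mathcal S,b}(|f|)$, while sparse domination of $C_b^{(2)}$ will produce terms of the form $\langle|b-b_Q||f|\rangle_Q\chi_Q$, i.e.\ $\mathcal T_{\mathcal S,b}^*(|f|)$.

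The heart of the argument is a one-step recursion on balls. Fixing a ball $B_0\subset X$, I would show that there exist pairwise disjoint sub-balls $\{P_j\}\subset B_0$ with $\sum_j\mu(P_j)\le\tfrac12\mu(B_0)$ such that for a.e.\ $x\in B_0$,
\begin{align*}
  C_b^{(1)}(f\chi_{4A_0B_0})(x)\chi_{B_0}(x) &\le C\,|b(x)-b_{B_0}|\,\langle|f|\rangle_{4A_0B_0}\chi_{B_0}(x)+\sum_jC_b^{(1)}(f\chi_{4A_0P_j})(x)\chi_{P_j}(x),\\
  C_b^{(2)}(f\chi_{4A_0B_0})(x)\chi_{B_0}(x) &\le C\,\langle|b-b_{B_0}||f|\rangle_{4A_0B_0}\chi_{B_0}(x)+\sum_jC_b^{(2)}(f\chi_{4A_0P_j})(x)\chi_{P_j}(x).
\end{align*}
The sub-balls $P_j$ will be produced by a Calder\'on--Zygmund/Whitney-type decomposition of the bad set
\[
  E=\bigl\{x\in B_0:\mathcal M(f\chi_{4A_0B_0})(x)>\alpha\langle|f|\rangle_{4A_0B_0}\bigr\}\cup\bigl\{x\in B_0:\mathcal M_{B_0}f(x)>\alpha\langle|f|\rangle_{4A_0B_0}\bigr\}
\]
together with its analogue where $f$ is replaced by $(b-b_{B_0})f$ (and $\langle|f|\rangle$ by $\langle|b-b_{B_0}||f|\rangle$). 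Weak-type $(1,1)$ of $\mathcal M$, of $\mathcal M_{B_0}$ (via Lemma~\ref{lem TM}), and of the $b$-weighted analogues make $\mu(E)\le\tfrac12\mu(B_0)$ for $\alpha$ large enough; Lemma~\ref{lem TM} is exactly what is needed to control the "exterior" contribution $C_b^{(i)}(f\chi_{4A_0B_0\setminus 4A_0P_j})$ on $P_j$ by the $B_0$-side of the recursion.

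Once the recursion is in place, I would iterate it: start with a large ball $B_0$ containing $\operatorname{supp}f$ (possible since $f$ is bounded and compactly supported and $\mu(X)=\infty$), put $\mathcal F_0=\{B_0\}$, and define $\mathcal F_{k+1}$ by applying the recursion to every ball in $\mathcal F_k$. The collection $\mathcal F=\bigcup_k\mathcal F_k$ is $\tfrac12$-sparse by the measure-halving condition and telescoping the recursion yields
\[
  C_b(f)(x)\le C\sum_{B\in\mathcal F}\Bigl(|b(x)-b_B|\,\langle|f|\rangle_{4A_0B}+\langle|b-b_B||f|\rangle_{4A_0B}\Bigr)\chi_B(x).
\]
Finally, I would convert this ball-indexed estimate to a dyadic-indexed one via Theorem~\ref{thm:existence2}: for each ball $B$ there is a $t\in\{1,\dots,T\}$ and $Q\in\mathcal D^t$ with $4A_0B\subset Q\subset cB$, so $\mu(4A_0B)\approx\mu(Q)$ and $b_{4A_0B}$ can be replaced by $b_Q$ at the cost of an enlarged oscillation term absorbed into $\Omega(b,Q)$ (cf.\ Lemma~\ref{lem b-bQ}). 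Splitting the family $\mathcal F$ into at most $T$ subfamilies according to which adjacent system captures each ball yields $\eta$-sparse families $\mathcal S_t\subset\mathcal D^t$ and the desired bound.

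The main obstacle I anticipate is the recursive step, specifically verifying the "exterior" estimate: showing that for $x\in P_j$ the part $C_b^{(i)}(f\chi_{4A_0B_0\setminus 4A_0P_j})(x)$ is actually controlled by the local $B_0$-averages rather than regenerating a full $C_b^{(i)}$ term. This requires a grand-maximal-truncation analogue of $\mathcal M_{B_0}$ for each of $C_b^{(1)}$ and $C_b^{(2)}$ (the latter needs the integrand $|b-b_{B_0}||f|$, not $|f|$), careful use of the essential supremum in the definition of $\mathcal M_{B_0}$, and bookkeeping of the $A_0$ dilations inherent to the quasi-metric setting.
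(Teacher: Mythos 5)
There is a genuine gap in the very first step of your plan. You begin by splitting $C_b(f)\le C_b^{(1)}(f)+C_b^{(2)}(f)$ using the inequality $|b(x)-b(y)|\le|b(x)-b_B|+|b(y)-b_B|$, where $B$ is the \emph{varying} ball in the supremum defining $C_b$. This produces the operators
\[
C_b^{(1)}(f)(x)=\sup_{B\ni x}|b(x)-b_B|\,\langle|f|\rangle_B,\qquad
C_b^{(2)}(f)(x)=\sup_{B\ni x}\langle|b-b_B||f|\rangle_B,
\]
in which the reference mean $b_B$ changes from ball to ball. The recursion you then state for $C_b^{(1)}$, namely
\[
C_b^{(1)}(f\chi_{4A_0B_0})\chi_{B_0}\le C|b(x)-b_{B_0}|\langle|f|\rangle_{4A_0B_0}\chi_{B_0}+\sum_jC_b^{(1)}(f\chi_{4A_0P_j})\chi_{P_j},
\]
requires, on the good set $B_0\setminus\bigcup_jP_j$, the pointwise bound
\[
\sup_{B\ni x}|b(x)-b_B|\langle|f|\chi_{4A_0B_0}\rangle_B\le C|b(x)-b_{B_0}|\langle|f|\rangle_{4A_0B_0}.
\]
Writing $|b(x)-b_B|\le|b(x)-b_{B_0}|+|b_{B_0}-b_B|$, the first piece is handled by the stopping conditions, but the second produces a new term $|b_{B_0}-b_B|\langle|f|\rangle_B$ with a \emph{ball-dependent oscillation} $|b_{B_0}-b_B|$. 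For $b\in L^1_{\rm loc}$, which is all that is assumed at this stage, this quantity is not controlled by anything that appears in the sparse operator or by any of your stopping conditions, and it does not shrink as $B$ shrinks. The same problem appears in $C_b^{(2)}$, where $|b(y)-b_B|$ again carries the varying reference. So the recursion as stated does not close, and the obstacle is not (only) the exterior estimate you anticipated; the interior (good-set) estimate already fails.

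The paper avoids this by \emph{not} splitting $C_b$ globally. It keeps $C_b$ intact, fixes $B_0$, picks once and for all the dyadic cube $Q_0\supset 4A_0B_0$, and only then applies $|b(x)-b(y)|\le|b(x)-b_{Q_0}|+|b(y)-b_{Q_0}|$ inside the recursion. Because $b_{Q_0}$ is \emph{fixed} in that step, $C_b(f\chi_{4A_0B_0})(x)$ is bounded by $|b(x)-b_{Q_0}|\,\mathcal M(f\chi_{4A_0B_0})(x)+\mathcal M\bigl((b-b_{Q_0})f\chi_{4A_0B_0}\bigr)(x)$, and only a \emph{single} grand-maximal truncation $\mathcal M_{B_0}$ (applied to $f$ and to $(b-b_{Q_0})f$, cf.\ Lemma~\ref{lem TM}) is needed to drive the Calder\'on--Zygmund stopping time; the stopping sets $E_1,\dots,E_4$ involve $|f|$, $\mathcal M_{B_0}f$, $|(b-b_{Q_0})f|$, and $\mathcal M_{B_0}((b-b_{Q_0})f)$, all with the same fixed $b_{Q_0}$. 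The rest of your outline (Calder\'on--Zygmund selection of $P_j$, iteration to build the sparse family, aggregation over $X=\bigcup_j2^jB_0$, and conversion of balls to adjacent dyadic cubes at comparable scale) does match the paper's strategy; the flaw is confined to the initial decomposition, but it is decisive, because everything downstream in your plan is phrased in terms of $C_b^{(1)}$ and $C_b^{(2)}$ separately. If you instead delay the triangle inequality until after $B_0$ and $Q_0$ are fixed, the argument closes exactly as in the paper and still yields the claimed $\mathcal T_{\mathcal S_t,b}$ and $\mathcal T^*_{\mathcal S_t,b}$ terms after replacing $b_{Q_0}$ by $b_{R_Q}$.
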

\begin{proof}
We recall from Section 2.2, for each ball
$B(x,r)\subseteq X$ with $\delta^{k+3}<r\leq\delta^{k+2},
k\in\Z$, there exist $t \in \{1, 2, \ldots, T\}$ and
$Q\in\mathscr{D}^t$ of generation $k$ and with center point
$^tx^k_\alpha$ such that $\rho(x,{}^tx_\alpha^k) <
2A_0\delta^{k}$ and
$    B(x,r)\subseteq Q\subseteq B(x,Cr)$. Here and in what follows, $A_0$ denotes the constant in \eqref{eqn:quasitriangleineq}.

Fix a ball $B_0\subset X$, then it is clear that there exist a positive constant $C$, $t_0 \in \{1, 2, \ldots, T\}$ and $Q_0\in\mathscr{D}^{t_0}$ such that
$    4A_0B_0\subseteq Q_0\subseteq C(4A_0B_0)$.
We now show that there exists a 
${1\over 8C_{A_0,\mu}}$-sparse
family $\mathcal F^{t_0}\subset \mathcal D^{t_0}(B_0)$ such that for a.e.
$x\in B_0$,
\begin{align}\label{ee a}
|C_b(f\chi_{4A_0B_0})(x)|\leq C\sum_{Q\in \mathcal F^{t_0}} \Big(|b(x)-b_{R_Q}||f|_{4A_0Q} + |(b-b_{R_Q})f|_{4A_0Q} \Big)\chi_Q(x).
\end{align}
Here, $R_Q$ is the dyadic cube in $\mathscr{D}^t$ for some $t \in \{1, 2, \ldots, T\}$ such that
$4A_0Q\subset R_Q \subset C(4A_0Q)$.

It suffices to prove the following recursive claim: there exist pairwise disjoint cubes $P_j\in\mathcal D^{t_0}(B_0)$ such that
$\sum_j \mu(P_j)\leq {1\over 2}\mu(B_0)$ and
\begin{align}\label{ee a claim}
|C_b(f\chi_{4A_0B_0})(x)|\chi_{B_0}&\leq C  \Big(|b(x)-b_{Q_0}||f|_{4A_0B_0} + |(b-b_{Q_0})f|_{4A_0B_0} \Big)\\
&\quad+ \sum_j |C_b(f\chi_{4A_0P_j})(x)|\chi_{P_j}\nonumber
\end{align}
a.e. on $B_0$. 
Here we have a ${1\over 2}$-sparse family since the sets $E_{Q}=Q\backslash \cup_jP_j$, and then we can appeal to the discussion after Definition \ref{D:Carleson}.


Now observe that for arbitrary pairwise disjoint cubes $P_j\in\mathcal D^{t_0}(B_0)$,
\begin{align*}
&|C_b(f\chi_{4A_0B_0})(x)|\chi_{B_0}\\
&=|C_b(f\chi_{4A_0B_0})(x)|\chi_{B_0\backslash \cup_jP_j} + \sum_j |C_b(f\chi_{4A_0B_0})(x)|\chi_{P_j} \\
&\leq |C_b(f\chi_{4A_0B_0})(x)|\chi_{B_0\backslash \cup_jP_j} + \sum_j |C_b(f\chi_{4A_0B_0\backslash 4A_0P_j})(x)|\chi_{P_j}+ \sum_j |C_b(f\chi_{4A_0P_j})(x)|\chi_{P_j}.
\end{align*}
Hence, in order to prove the recursive claim \eqref{ee a claim}, it suffices to show that
one can select pairwise disjoint cubes $P_j\in\mathcal D^{t_0}(B_0)$ with
$\sum_j\mu(P_j)\leq {1\over2}\mu(B_0)$ and such that for a.e. $x\in B_0$,
\begin{align}\label{ee b}
 &|C_b(f\chi_{4A_0B_0})(x)|\chi_{B_0\backslash \cup_jP_j} + \sum_j |C_b(f\chi_{4A_0B_0\backslash 4A_0P_j})(x)|\chi_{P_j}\\
 &\leq C\Big(  |b(x)-b_{Q_0}| |f|_{4A_0B_0}  +  |(b-b_{Q_0})f|_{4A_0B_0}  \Big).\nonumber
\end{align}

To see this, by definition, we obtain that
\begin{align}\label{ee b-c}
 &|C_b(f\chi_{4A_0B_0})(x)|\chi_{B_0\backslash \cup_jP_j}(x) + \sum_j |C_b(f\chi_{4A_0B_0\backslash 4A_0P_j})(x)| \chi_{P_j}(x)\\
 &\leq \sup_{ \bar B \ni x} {1\over \mu(\bar B)} \int_{\bar B} |b(x)-b(y)| |f(y)|\chi_{4A_0B_0}(y) d\mu(y)  \chi_{B_0\backslash \cup_jP_j}(x) \nonumber\\
 &+  \sum_j  \sup_{ \bar B \ni x} {1\over \mu(\bar B)} \int_{\bar B} |b(x)-b(y)| |f(y)|\chi_{4A_0B_0\backslash 4A_0P_j}(y) d\mu(y)  \chi_{P_j}(x)    \nonumber\\
 &\leq |b(x)-b_{Q_0}| \sup_{ \bar B \ni x} {1\over \mu(\bar B)} \int_{\bar B}  |f(y)|\chi_{4A_0B_0}(y) d\mu(y)  \chi_{B_0\backslash \cup_jP_j}(x) \nonumber\\
 &\quad+ |b(x)-b_{Q_0}| \sum_j  \sup_{ \bar B \ni x} {1\over \mu(\bar B)} \int_{\bar B}  |f(y)|\chi_{4A_0B_0\backslash 4A_0P_j}(y) d\mu(y)  \chi_{P_j}(x)    \nonumber\\
 &\quad+\sup_{ \bar B \ni x} {1\over \mu(\bar B)} \int_{\bar B} |b(y)-b_{Q_0}| |f(y)|\chi_{4A_0B_0}(y) d\mu(y)  \chi_{B_0\backslash \cup_jP_j}(x) \nonumber\\
 &\quad+  \sum_j  \sup_{ \bar B \ni x} {1\over \mu(\bar B)} \int_{\bar B} |b(y)-b_{Q_0}| |f(y)|\chi_{4A_0B_0\backslash 4A_0P_j}(y) d\mu(y)  \chi_{P_j}(x).    \nonumber\\
 \nonumber
 \end{align}

We now choose $\alpha$ such that the set $E:=E_1\cup E_2 \cup E_3\cup E_4$, with
\begin{align*}
E_1=\{x\in B_0: |f(x)|> \alpha |f|_{4A_0B_0}\},
\end{align*}
\begin{align*}
E_2=  \Big\{ x\in B_0:  \mathcal M_{ B_0}f(x)  > \alpha C |f|_{4A_0B_0} \Big\},
\end{align*}
\begin{align*}
E_3=\{x\in B_0: |(b(x)-b_{Q_0})f(x)|> \alpha |(b-b_{Q_0})f|_{4A_0B_0}\},
\end{align*}
and
\begin{align*}
E_4=
 \{ x\in B_0: \mathcal M_{ B_0}\big((b -b_{Q_0})f\big)(x)  > \alpha   C  |(b-b_{Q_0})f|_{4A_0B_0} \},
\end{align*}
will satisfy $$\mu(E)\leq {1\over 2^{2+n}} \mu(B_0).$$

We now apply the Calder\'on--Zygmund decomposition to the function $\chi_E$ on $B_0$ at the height
$\lambda= {1\over 2^{n+1}}$, where $n$ is the upper dimension of the measure $\mu$ as in \eqref{upper dimension}, to obtain the pairwise disjoint cubes $P_j\in \mathcal D^{t_0}(B_0)$ such that
$$ \chi_E(x)\leq {1\over 2^{n+1}} \quad {\rm \ a.e.\ } x\not\in \cup_jP_j $$
and hence we have that $ \mu(E\backslash \cup_j P_j)=0$.
Moreover, we have that
$$ \sum_j\mu(P_j)  = \mu\Big( \bigcup_j P_j\Big)\leq 2^{n+1}\mu(E)\leq {1\over2}\mu(B_0),$$
and that
$$ {1\over 2^{n+1}}\leq {1\over \mu(P_j)} \int_{P_j}\chi_E(x)d\mu(x) = {\mu(P_j\cap E)\over \mu(P_j)}\leq {1\over2}, $$
which implies that
$$
 P_j\cap E^c\not=\emptyset.$$

\bigskip
Therefore, we observe that for each $P_j$, since $P_j\cap E^c\not=\emptyset$, we have that
$$\mathcal{M}_{ B_0}\big((b -b_{Q_0})f\big)(x) \leq \alpha C |(b-b_{Q_0})f|_{4A_0B_0} $$
for some $x\in P_j$, which implies that
 $$    \esssup_{\xi\in P_j} \sup_{ \bar B \ni \xi} {1\over \mu(\bar B)} \int_{\bar B} |b(y)-b_{Q_0}| |f(y)|\chi_{4A_0B_0\backslash 4A_0P_j}(y) d\mu(y) \leq \alpha C |(b-b_{Q_0})f|_{4A_0B_0}.   $$
 Similarly,
 we have
 $$   \esssup_{\xi\in P_j} \sup_{ \bar B \ni \xi} {1\over \mu(\bar B)} \int_{\bar B}  |f(y)|\chi_{4A_0B_0\backslash 4A_0P_j}(y) d\mu(y) \leq \alpha C |f|_{4A_0B_0} .  $$

Also, by   Lemma \ref{lem TM}, we have that
\begin{align*}
\sup_{ \bar B \ni x} {1\over \mu(\bar B)} \int_{\bar B}  |f(y)|\chi_{4A_0B_0}(y) d\mu(y)\leq C  |f(x)| + \mathcal M_{ B_0}f(x).
\end{align*}
and
\begin{align*}
\sup_{ \bar B \ni x} {1\over \mu(\bar B)} \int_{\bar B} |b(y)-b_{Q_0}| |f(y)|\chi_{4A_0B_0}(y) d\mu(y) \leq C   |b(x)-b_{Q_0}| |f(x)| + \mathcal M_{ B_0}\big((b-b_{Q_0})f\big)(x).
\end{align*}

Since $\mu(E\backslash \cup_j P_j)=0$, we have that from the definition of the set $E$, the following estimates
$$   |f(x)|\leq \alpha |f|_{4A_0B_0}, \quad   |(b(x)-b_{Q_0})f(x)|\leq \alpha |(b-b_{Q_0})f|_{4A_0B_0} $$
  hold for $\mu$-almost every $x\in B_0\backslash \cup_j P_j$, and also
$$  \mathcal M_{ B_0}f(x)  \leq \alpha C |f|_{4A_0B_0} ,\quad  \mathcal M_{ B_0}\big((b -b_{Q_0})f\big)(x) \leq \alpha C |(b-b_{Q_0})f|_{4A_0B_0} $$
  hold for $\mu$-almost every $x\in B_0\backslash \cup_j P_j$.

Combining these fact with \eqref{ee b-c}, we see that \eqref{ee b} holds, which further implies that
\eqref{ee a} holds.

We now consider the partition of the space as follows. Suppose $f$ is supported in a ball $B_0\subset X$.  We have
$$ X = \bigcup_{j=0}^{\infty} 2^jB_0. $$

First, we note that  the ball $B_0$ is covered by $4A_0B_0$.   Consider the annuli $U_j:= 2^{j}B_0\backslash 2^{j-1}B_0$ for $j\geq1$. It is clear that we can choose the balls $\{\tilde B_{j,\ell}\}_{\ell=1}^{L_j}$ with radius $2^{j-2}r_B$ to cover $U_j$, satisfying that the center of each the ball $\tilde B_{j,\ell}$ is in  $U_j\not=\emptyset$ and that
$\sup_j L_j\leq C_{A_0,\mu}$, where $C_{A_0,\mu}$ is an absolute constant depending on  $A_0$ and $C_\mu$ only, here $C_\mu$ is the constant as in  \eqref{doubling condition}. Moreover, we also have that
for each such $\tilde B_{j,\ell}$, the enlargement $4A_0 B_{j,\ell}$ covers $B_0$.
Also, we note that for each $\tilde B_{j,\ell}$, there exist a positive constant $C$, $t_{j,\ell} \in \{1, 2, \ldots, T\}$ and $\tilde Q_{j,\ell}\in\mathscr{D}^{t_{j,\ell}}$ such that
$    4A_0\tilde B_{j,\ell}\subseteq \tilde Q_{j,\ell}\subseteq C(4A_0\tilde B_{j,\ell})$.

We now apply \eqref{ee a} to each $\tilde B_{j,\ell}$, then
we obtain a ${1\over 2}$-sparse  family $\tilde F_{j,\ell} \subset \mathscr{D}^{t_{j,\ell}}(\tilde B_{j,\ell})$ such that
\eqref{ee a} holds for a.e. $x\in \tilde B_{j,\ell}$.  

Now we set $\mathcal F = \cup_{j,\ell} \tilde F_{j,\ell}$.
Note that the balls $\tilde B_{j,\ell}$ are overlapping at most $4C_{A_0,\mu}$ times.   Then we obtain that
$\mathcal F $ is a ${1\over 8C_{A_0,\mu}} $-sparse family and for a.e. $x\in X$,
 \begin{align}\label{ee a a}
|C_b(f)(x)|\leq C\sum_{Q\in \mathcal F} \Big(|b(x)-b_{R_Q}||f|_{4A_0Q} + |(b-b_{R_Q})f|_{4A_0Q} \Big)\chi_Q(x).
\end{align}

Since $4A_0Q\subset R_Q$, and it is clear that $\mu(R_Q)\leq \overline C\mu(4A_0Q)$, we obtain that
$|f|_{4A_0Q} \leq C |f|_{R_Q} $. Next, we further set
$$ \mathcal S_{t}=\{R_Q\in \mathcal D^t:\ Q\in\mathcal F\}, \quad t\in\{1,2,\ldots,T\}, $$
 and from the fact that $\mathcal F$ is ${1\over 8C_{A_0,\mu}}$-sparse, we can obtain that
 each family $\mathcal S_{t}$ is ${1\over 8C_{A_0,\mu}\overline C}$-sparse.
 Now we let
 $$\eta={1\over 8C_{A_0,\mu}\overline C}. $$
 Then it follows that
 \begin{align}\label{ee a a final}
|C_b(f)(x)|\leq C\sum_{t=1}^T\sum_{R\in \mathcal S_t} \Big(|b(x)-b_{R}||f|_{R} + |(b-b_{R})f|_{R} \Big)\chi_R(x),
\end{align}
finishing the proof.
\end{proof}

\subsection{Proof of Theorem \ref{thm main2}}

\begin{proof}
Let $\mathcal D$ be a dyadic system in $(X,d,\mu)$ and let
$\mathcal S$ be a sparse family from $\mathcal D$. By Theorem \ref{thm sparse1}, we only need to prove
$$\| \mathcal T_{\mathcal S,b} : L^p_{\lambda_1}(X) \rightarrow L^p_{\lambda_2}(X) \|  \le C\Big([\lambda_1]_{A_p} [\lambda_2]_{A_p}  \Big)^{\max\{1,{1\over p-1}\}}  \| b \| _{{\rm BMO}_{\nu}(X)} $$
and
$$\|\mathcal T_{\mathcal S,b}^* : L^p_{\lambda_1}(X) \rightarrow L^p_{\lambda_2}(X) \|  \le C\Big([\lambda_1]_{A_p} [\lambda_2]_{A_p}  \Big)^{\max\{1,{1\over p-1}\}}  \| b \| _{{\rm BMO}_{\nu}(X)} .$$

By duality, we have that
\begin{align}\label{A Lpnorm}
\|\mathcal T_{\mathcal S,b}f\|_{L^p_{\lambda_2}(X)} &\leq \sup_{g: \|g\|_{L^{p'}_{\lambda_2}(X)}=1}\sum_{Q\in\mathcal S}  \bigg(\int_Q |g(x){\lambda_2}(x)||b(x)-b_Q| d\mu(x)\bigg)|f |_Q .
\end{align}

Now by Lemma \ref{lem b-bQ}, there exists a sparse family
$\tilde{\mathcal S}\subset \mathcal D$ such that $\mathcal S\subset \tilde{\mathcal S}$ and for every cube $Q\in \tilde{\mathcal S}$, for $\mu$-almost every $x\in Q$,
\begin{align*}
|b(x)-b_Q|\leq C\sum_{P\in \tilde{\mathcal S}, P\subset Q} \Omega(b,P)\chi_P(x).
\end{align*}

Since $b$ is in ${\rm BMO}_{\nu }(X)$, then we have for $\mu$-almost every $x\in Q$
\begin{align*}
|b(x)-b_Q|\leq C\|b\|_{{\rm BMO}_{\nu }(X)} \sum_{P\in \tilde{\mathcal S}, P\subset Q}  {\nu (P)\over \mu(P)} \chi_P(x).
\end{align*}
Then combining this estimate and inequality \eqref{A Lpnorm}, we further have
\begin{align*}
&\|\mathcal T_{\mathcal S,b}f\|_{L^p_{\lambda_2}(X)} \\
&\leq C \|b\|_{{\rm BMO}_{\nu }(X)} \sup_{g: \|g\|_{L^{p'}_{\lambda_2}(X)}=1}\sum_{Q\in\mathcal S}  \Bigg( \int_Q |g(x){\lambda_2}(x)|   \bigg(\sum_{P\in \tilde{\mathcal S}, P\subset Q}  {\nu (P)\over \mu(P)} \chi_P(x)\bigg)    d\mu(x)\Bigg)|f |_Q \\
&\leq C \|b\|_{{\rm BMO}_{\nu }(X)} \sup_{g: \|g\|_{L^{p'}_{\lambda_2}(X)}=1}\sum_{Q\in\mathcal S}  \Bigg(\sum_{P\in \tilde{\mathcal S}, P\subset Q}  {\nu (P)\over \mu(P)} \int_Q |g(x){\lambda_2}(x)|     \chi_P(x)     d\mu(x)\Bigg)|f |_Q   \\
&\leq C \|b\|_{{\rm BMO}_{\nu }(X)} \sup_{g: \|g\|_{L^{p'}_{\lambda_2}(X)}=1}\sum_{Q\in\mathcal S}  \Bigg(\sum_{P\in \tilde{\mathcal S}, P\subset Q}     \nu (P) |g {\lambda_2} |_P     \Bigg)|f |_Q   \\
&\leq C \|b\|_{{\rm BMO}_{\nu }(X)} \sup_{g: \|g\|_{L^{p'}_{\lambda_2}(X)}=1}\sum_{Q\in\mathcal S}  \Bigg(\int_Q\sum_{P\in \tilde{\mathcal S}, P\subset Q}      |g {\lambda_2} |_P\chi_P(x)\nu(x)d\mu(x)     \Bigg)|f |_Q   \\
&\leq C \|b\|_{{\rm BMO}_{\nu }(X)} \sup_{g: \|g\|_{L^{p'}_{\lambda_2}(X)}=1}\sum_{Q\in\mathcal S}  \Bigg(\int_Q\mathcal A_{\tilde{\mathcal S}}(|g {\lambda_2}|)(x)\nu(x)d\mu(x)     \Bigg)|f |_Q   \\
&\leq C \|b\|_{{\rm BMO}_{\nu }(X)} \sup_{g: \|g\|_{L^{p'}_{\lambda_2}(X)}=1}\int_X\sum_{Q\in\mathcal S} |f |_Q\chi_Q(x)  \mathcal A_{\tilde{\mathcal S}}(|g {\lambda_2}|)(x)\nu(x)d\mu(x)       \\
&\leq C \|b\|_{{\rm BMO}_{\nu }(X)} \sup_{g: \|g\|_{L^{p'}_{\lambda_2}(X)}=1}\int_X\mathcal A_{\tilde{\mathcal S}}(|f|)(x)  \mathcal A_{\tilde{\mathcal S}}(|g {\lambda_2}|)(x)\nu(x)d\mu(x)     .
\end{align*}

Observe that $\mathcal A_{\tilde{\mathcal S}}$ is self-adjoint.
Then by H\"older's inequality, we have

\begin{align*}
\|\mathcal T_{\mathcal S,b}f\|_{L^p_{\lambda_2}(X)}
&\leq C \|b\|_{{\rm BMO}_{\nu }(X)} \sup_{g: \|g\|_{L^{p'}_{\lambda_2}(X)}=1}\int_X \mathcal A_{\tilde{\mathcal S}}\big(\mathcal A_{\tilde{\mathcal S}}(|f|)\nu \big)(x)  |g(x)| {\lambda_2}(x)d\mu(x) \\
&\leq C \|b\|_{{\rm BMO}_{\nu }(X)} \sup_{g: \|g\|_{L^{p'}_{\lambda_2}(X)}=1}\|\mathcal A_{\tilde{\mathcal S}}\big(\mathcal A_{\tilde{\mathcal S}}(|f|)\nu \big)\|_{L^p_{\lambda_2}(X)}\|g\|_{L^{p'}_{\lambda_2}(X)} \\
&\leq C \|b\|_{{\rm BMO}_{\nu }(X)} [\lambda_2]_{A_p}^{\max\{1,{1\over p-1}\}}  \| \mathcal A_{\tilde{\mathcal S}}(|f|)\nu  \|_{L^p_{\lambda_2}(X)}  \\
&\leq C \|b\|_{{\rm BMO}_{\nu }(X)} [\lambda_2]_{A_p}^{\max\{1,{1\over p-1}\}}  \| \mathcal A_{\tilde{\mathcal S}}(|f|)   \|_{L^p_{\lambda_1}(X)}  \\
&\leq C \|b\|_{{\rm BMO}_{\nu }(X)}\Big([\lambda_1]_{A_p} [\lambda_2]_{A_p}  \Big)^{\max\{1,{1\over p-1}\}}  \| f    \|_{L^p_{\lambda_1}(X)}.
\end{align*}

 Similarly,  we can obtain

\begin{align*}
\|\mathcal T_{\mathcal S,b}^*f\|_{L^p_{\lambda_2}(X)}
&\leq C \|b\|_{{\rm BMO}_{\nu }(X)}\Big([\lambda_1]_{A_p} [\lambda_2]_{A_p}  \Big)^{\max\{1,{1\over p-1}\}}  \| f    \|_{L^p_{\lambda_1}(X)}.
\end{align*}
This completes the proof of  Theorem \ref{thm main2}.
\end{proof}

We point out that the quantitative estimate with respect to the weights $\lambda_1$ and $\lambda_2$ that we obtain here
is better  comparing to the methods used in \cite{GHST} and \cite{AGKM}.

\begin{itemize}

\item In \cite{GHST}, for $\lambda_{1}^{-1},\lambda_{2}^{-1}\in A_1$, $\nu= \lambda_1\lambda_2^{-{1 }}$ and $b\in {\rm BMO}_\nu$, they obtained that
$$\|\lambda_{2}C_b(f)\|_{L^{\infty}}\leq C'(\lambda_1,\lambda_2)\|b\|_{{\rm BMO}_\nu}\|\lambda_1 f\|_{L^{\infty}}.$$
Then by extrapolation they obtained that for $1<p<\infty$, $\lambda_1,\lambda_2\in A_p$, $\nu = \lambda_1^{1\over p}\lambda_2^{-{1\over p}}$ and $b\in {\rm BMO}_\nu$,
$$\| C_b(f)\|_{L^{p}_{\lambda_{2}}}\leq C(b,\lambda_1,\lambda_2) \| f\|_{L^{p}_{\lambda_1}}.$$
They only showed that $C(b,\lambda_1,\lambda_2)$ depends on $b,\lambda_1,\lambda_2$.

\item In \cite{AGKM}, to prove the upper bound of $C_b$ in $\mathbb R^n$, they first proved that (see Corollary 1.11 in \cite{AGKM})
$$C_b(f)(x)\leq C \|b\|_* \mathcal{M}^2(f)(x),$$
where $ \|b\|_*$ is the norm
$$  \|b\|_*=\Big(\sup_{B} {1\over |B|} \int_B |b(x)-b_B|^pdx\Big)^{1\over p}.$$
By using John--Nirenberg's inequality we know that it is equivalent to the BMO norm, that is
$$ \|b\|_{{\rm BMO}(\mathbb R^n)}\leq \|b\|_*\leq C  \|b\|_{{\rm BMO}(\mathbb R^n)}.$$

Using this result directly, one can only get the one weight boundedness for $C_b$. That is, for $1<p<\infty$, $\lambda\in A_p$ and $b\in {\rm BMO}(\R^n)$,
$$\| C_b(f)\|_{L^{p}_{\lambda }}\leq C\|b\|_{{\rm BMO}(\R^n) }[\lambda]_{A_p}^{2\max\{1,{1\over p-1}\}} \| f\|_{L^{p}_{\lambda }}.$$

If we try to use this approach to obtain the two weight upper bound for $C_b$ in our setting, we will first need to obtain the quantitative estimate for the John--Nirenberg inequality for weighted BMO. However, this quantitative estimate, together with the quantitative estimate for $\mathcal{M}^2(f)(x)$, is certainly larger than what we obtained by using sparse operator.

For completeness, we provide the quantitative estimate for the John--Nirenberg inequality for weighted BMO as follows.

\end{itemize}

We now provide the quantitative version of Bloom's estimate \cite[Lemma 4.3]{B} on spaces of homogeneous type.

\begin{lem}\label{lem b-bQ}
Suppose $1<p<\infty$, $\lambda_1,\lambda_2\in A_p$, $\nu:= \lambda_1^{1\over p}\lambda_2^{-{1\over p}}$. Suppose $b\in {\rm BMO}_{\nu}(X)$. Then
there exists $\varepsilon>0$ such that for all $1\leq r\leq p'+\varepsilon$,
\begin{align}\label{b-bQ eee0}
&{1\over \mu(B)}\int_{B}|b(x)-b_B|^r\lambda_{1}^{-{r\over p}}(x)d\mu(x)\nonumber\\
& \lesssim
                \begin{cases}
                  \|b\|_{{\rm BMO}_{\nu}}^{r}[\lambda_{2}]_{A_p}^{\frac{r}{p}} \bigg({\displaystyle1\over\displaystyle \mu(B)}\displaystyle\int_{B}\lambda_{2}^{-{1\over p}}(x)d\mu(x)\bigg)^r,\quad 1\leq r\leq p'\\[18pt]
                  [\lambda_{1}]_{A_p}^{\frac{r}{p}} [\lambda_{2}]_{A_p}^{\frac{r}{p}} \|b\|_{{\rm BMO}_{\nu}}^{r} \left(\frac{\displaystyle1}{\displaystyle\mu(B)}\displaystyle\int_{B}\lambda_{2}^{\frac{-1}{p}}(x)d\mu(x)\right)^{r},\quad p'< r\leq p'+\varepsilon\\
                \end{cases}
 \end{align}
for  every ball $B$.
\end{lem}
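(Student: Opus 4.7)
The plan is to establish the lemma in three stages, corresponding to the ranges $r=1$, $1<r\le p'$, and $p'<r\le p'+\varepsilon$. The common starting point is the pointwise sparse domination
\[
|b(x)-b_B|\lesssim\sum_{R\in\widetilde{\mathcal S},\,R\subset B}\Omega(b,R)\chi_R(x)\le\|b\|_{{\rm BMO}_{\nu}(X)}\sum_{R\in\widetilde{\mathcal S},\,R\subset B}\frac{\nu(R)}{\mu(R)}\chi_R(x),
\]
valid for $\mu$-a.e.\ $x\in B$, which is just Lemma~3.5 of \cite{DGKLWY} (restated earlier) combined with the definition of ${\rm BMO}_{\nu}(X)$. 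This converts the claim into a statement about sparse sums of weight averages.

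For $r=1$, I would integrate the above against $\lambda_1^{-1/p}\,d\mu$, use the factorisation $\nu=\lambda_1^{1/p}\lambda_2^{-1/p}$, and apply H\"older's inequality cube by cube together with the $A_p$ condition for $\lambda_2$ (equivalently, the reverse H\"older inequality for $\lambda_2^{-1/(p-1)}\in A_{p'}$) to control $\frac{\nu(R)}{\mu(R)}\cdot\frac{\lambda_1^{-1/p}(R)}{\mu(R)}$ by a constant multiple of $[\lambda_2]_{A_p}^{1/p}\frac{\lambda_2^{-1/p}(R)}{\mu(R)}$. The sparseness of $\widetilde{\mathcal S}$, through the usual disjoint major subsets $E_R\subset R$, then collapses the sum over $R\subset B$ into a single $B$-average, yielding the $r=1$ case.

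To extend this to $1<r\le p'$, I plan to run a weighted John--Nirenberg iteration. A Calder\'on--Zygmund stopping-time decomposition of $b-b_B$ at geometrically growing heights, performed in the weighted measure $\lambda_1^{-r/p}\,d\mu$ (which is doubling because $\nu\in A_2$, a standard consequence of $\lambda_1,\lambda_2\in A_p$), produces a distributional estimate of the form
\[
\mu\big(\{x\in B:|b(x)-b_B|>t\}\big)\lesssim \mu(B)\exp\!\big(-ct/\|b\|_{{\rm BMO}_{\nu}(X)}\big)
\]
whose constants are polynomial in $[\lambda_2]_{A_p}$. A layer-cake integration against $\lambda_1^{-r/p}\,d\mu$, using the cutoff level provided by the base case, then promotes the $r=1$ bound to the announced $L^r$ bound with weight constant $[\lambda_2]_{A_p}^{r/p}\|b\|_{{\rm BMO}_\nu(X)}^{r}$.

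Finally, for $p'<r\le p'+\varepsilon$, I would invoke the reverse H\"older inequality \eqref{e-reverse holder} applied to $\lambda_1^{-1/(p-1)}\in A_{p'}$: this supplies an $\varepsilon=\varepsilon([\lambda_1]_{A_p})>0$ together with a quantitative gain of integrability that allows the H\"older step of the base case to be rerun with $p'$ replaced by $p'+\varepsilon$, at the price of the additional factor $[\lambda_1]_{A_p}^{r/p}$. The main obstacle I anticipate is the sharp bookkeeping of all the implicit constants --- the doubling constant of $\nu$ as it propagates through the John--Nirenberg iteration, and the reverse H\"older exponent of \eqref{e-reverse holder} as it interacts with the conjugate exponent $p'$ --- so that the final powers on $[\lambda_1]_{A_p}$ and $[\lambda_2]_{A_p}$ come out exactly to $r/p$ rather than the cruder upper bounds one would obtain without care.
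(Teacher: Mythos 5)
Your proposal takes a genuinely different route from the paper. The paper's argument for $1\le r\le p'$ is a short, \emph{ball-local} computation: it fixes a single ball $B$, applies H\"older's inequality (in the form with negative conjugate exponents, i.e.\ ``reverse H\"older'' for integrals), then invokes the $\mathrm{BMO}_\nu$ definition directly on $B$, and closes with the $A_p$ condition for $\lambda_2$; no dyadic decomposition, sparse family, or iteration appears. For $p'<r\le p'+\varepsilon$ the paper switches to a Fefferman--Stein sharp-function bound (citing \cite{AD}): one writes $b^{\sharp}\lesssim\|b\|_{\mathrm{BMO}_\nu}M\nu$ pointwise, uses the weighted $L^r$ boundedness of $M$ with the weight $\lambda_1^{-r/p}$, and then carefully verifies (via H\"older and reverse H\"older) that $\lambda_1^{-r/p}\in A_r$ and $\lambda_2^{-r/p}\in A_r$ with the constant $[\lambda_i]_{A_p}^{r/p}$. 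Your plan instead runs through sparse domination of $b-b_B$ (as in the paper's other Lemma~3.5 of \cite{DGKLWY}) and a weighted John--Nirenberg iteration. The sparse route is natural given the rest of the paper, and for the endpoint $r=1$ your sketch is workable. The paper's direct route is both shorter and makes the constant bookkeeping transparent, which is the entire content of the lemma.

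There is, however, a genuine gap in your intermediate step for $1<r\le p'$. You claim the stopping-time iteration yields the distributional estimate
\[
\mu\big(\{x\in B:|b(x)-b_B|>t\}\big)\lesssim\mu(B)\exp\!\big(-ct/\|b\|_{\mathrm{BMO}_\nu(X)}\big),
\]
but this form of John--Nirenberg cannot hold for weighted BMO. The mean oscillation over a subcube $R\subset B$ is controlled by $\|b\|_{\mathrm{BMO}_\nu(X)}\,\nu(R)/\mu(R)$, and the ratio $\nu(R)/\mu(R)$ varies with $R$ and is not bounded above; so the geometric height at which one can stop in the Calder\'on--Zygmund iteration is not a fixed multiple of $\|b\|_{\mathrm{BMO}_\nu(X)}$. (Consider $b=\nu$ itself when $\nu$ is unbounded: the displayed tail bound would force $b-b_B$ to be exponentially integrable with a universal rate, which is false.) Any correct weighted John--Nirenberg statement must carry the $\nu$-averages along, and absorbing them back into the final expression is exactly where the $[\lambda_2]_{A_p}^{r/p}$ factor should come from; your sketch skips this. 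A second, smaller issue: the doubling of $\lambda_1^{-r/p}\,d\mu$ follows from $\lambda_1\in A_p$, not from ``$\nu\in A_2$'' as you assert. For the range $p'<r\le p'+\varepsilon$, your reverse-H\"older heuristic points in the right direction, but without the sharp-function step (or an equivalent mechanism such as showing $\lambda_1^{-r/p}\in A_r$ quantitatively and bounding $M$ on $L^r(\lambda_1^{-r/p})$), it is not clear how to convert the extra integrability into the stated power of $[\lambda_1]_{A_p}$, and the promised ``rerun the H\"older step with $p'$ replaced by $p'+\varepsilon$'' is not spelled out enough to check.
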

\begin{proof}
We will prove result when $r\leq p'$ using Holder's Inequality.
Without loss of generality we will assume $r=p'$ to show $\eqref{b-bQ eee0}$. Similar proof works for any $r<p'$. Fix a ball $B$ and choose $s= \frac{1}{p'}$ to use Holder's Inequality.
\begin{eqnarray*}
&&\frac{1}{\mu(B)}\int_{B}|b(x)-b_B|^{p'}\lambda_{1}^{\frac{-p'}{p}}d\mu(x)\\
 &\leq & \left(\frac{1}{\nu(B)}\int_{B}|b(x)-b_B|d\mu(x)\right)^{p'}\frac{\nu(B)^{p'}}{\mu(B)}\left(\int_{B}\lambda_{1}^{\frac{-p'}{p(1-p')}}(x)d\mu(x)\right)^{1-p'}\\
&\leq&\|b\|_{{\rm BMO}_{\nu}}^{p'}\frac{\nu(B)^{p'}}{\mu(B)}\left(\int_{B}\lambda_{1}^{\frac{-p'}{p(1-p')}}d\mu(x)\right)^{1-p'}\\
&=& \|b\|_{{\rm BMO}_{\nu}}^{p'}\frac{1}{\mu(B)}\left(\int_{B}\nu(x)d\mu(x)\right)^{p'}\left(\int_{B}\lambda_{1}^{\frac{-p'}{p(1-p')}}(x)d\mu(x)\right)^{1-p'}\\
&=& \|b\|_{{\rm BMO}_{\nu}}^{p'}\frac{1}{\mu(B)}\left(\int_{B}(\nu(x)^{p'})^\frac{1}{p'}(x)d\mu(x)\right)^{p'}\left(\int_{B}(\lambda_{1}^{\frac{-p'}{p}})^\frac{1}{(1-p')}(x)d\mu(x)\right)^{1-p'}\\
&\leq& \|b\|_{{\rm BMO}_{\nu}}^{p'}\frac{1}{\mu(B)}\left(\int_{B}\lambda_{1}^{\frac{p'}{p}}(x)\lambda_{2}^{\frac{-p'}{p}}(x)\lambda_{1}^{\frac{-p'}{p}}(x)d\mu(x)\right)\hspace{1 cm}\textbf{(Reverse Holder's)}\\
&\leq& \|b\|_{{\rm BMO}_{\nu}}^{p'}\frac{1}{\mu(B)}\left(\int_{B}\lambda_{2}^{\frac{-p'}{p}}(x)d\mu(x)\right)\\
&\leq&\|b\|_{{\rm BMO}_{\nu}}^{p'}[\lambda_{2}]_{A_p}^{\frac{p'}{p}}\left(\frac{1}{\mu(B)}\int_{B}\lambda_{2}^{\frac{-1}{p}}(x)d\mu(x)\right)^{p'}.
\end{eqnarray*}
Here we used Holder's Inequality in the last step to complete the proof.

Now we will show the proof when $r>p'$. We choose an index  $r$ for which reverse Holder's Inequality holds for weights $\lambda_{1}^{-\frac{p'}{p}}$ and $\lambda_{2}^{-\frac{p'}{p}}$ with exponent $1+\delta = \frac{r}{p'}$. Fix a ball $B$ and let $x\in B$, then

\begin{equation*}
    \frac{1}{\mu(B)}\int_{B}|b(x)-b_{B}|d\mu(x) \leq \|b\|_{{\rm BMO}_{\nu}}\frac{1}{\mu(B)}\int_{B}\nu(x)d\mu(x).
\end{equation*}
Hence we have $\hat{b}\leq \|b\|_{{\rm BMO}_{\nu}}\nu^{*} $, where
\begin{eqnarray*}
\hat{b}&:=&\sup_{B}\frac{1}{\mu(B)}\int_{B}|b(x)-b_{B}|d\mu(x),\\
\nu^{*}&:=&\sup_{B}\frac{1}{\mu(B)}\int_{B}\nu(x)d\mu(x).\\
\end{eqnarray*}
Following the proof \cite[Proposition 2.1]{AD} the following Sharp function estimate holds for some constant $C_{X,b}$ depending on the underlying space $X$ and the operator $b$,
\begin{align*}
{1\over \mu(B)}\int_{B}|b(x)-b_B|^r\lambda_{1}^{-{r\over p}}(x)d\mu(x)& \leq C_{X,b} [\lambda_{1}^{\frac{-r}{p}}]_{A_p}\int_{B} (\hat{b})^{r} \lambda_{1}^{-{r\over p}}(x)d\mu(x) \\
&\leq C_{X,b} [\lambda_{1}^{\frac{-r}{p}}]_{A_p} \|b\|_{{\rm BMO}_{\nu}}^{r}\int_{B}(\nu^{*})^{r}\lambda_{1}^{-{r\over p}}(x)d\mu(x).
\end{align*}
We claim that $\lambda_{1}^{\frac{-r}{p}} \in A_r$ when $\lambda_1 \in A_p$. We will now begin to prove our claim.
\begin{eqnarray*}
&&{1\over \mu(B)}\int_{B}\lambda_{1}(x)^{\frac{-r}{p}}\left(\frac{1}{\mu(B)}\int_{B}\lambda_{1}^{\frac{r'}{p}}(x)d\mu(x)\right)^{\frac{r}{r'}}\\
&\leq&{1\over \mu(B)}\int_{B}\lambda_{1}(x)^{\frac{-r}{p}}\left(\frac{1}{\mu(B)}\int_{B}\lambda_{1}(x)d\mu(x)\right)^{\frac{r}{p}}\hspace{1 cm} \textbf{(Holder's Inequality)}\\
&\leq&\left(\frac{1}{\mu(B)}\int_{B}\lambda_{1}^{\frac{-p'}{p}}(x)d\mu(x)\right)^{\frac{r}{p'}}\left(\frac{1}{\mu(B)}\int_{B}\lambda_{1}(x)d\mu(x)\right)^{\frac{r}{p}}\hspace{1 cm}\textbf{(Reverse Holder's)}\\
&\leq&\left(\frac{1}{\mu(B)}\int_{B}(\lambda_{1}^{-1})^{\frac{1}{p-1}}(x)d\mu(x)\right)^{\frac{r(p-1)}{p}}\left(\frac{1}{\mu(B)}\int_{B}\lambda_{1}(x)d\mu(x)\right)^{\frac{r}{p}} \\
&\leq&\left( \left(\frac{1}{\mu(B)}\int_{B}(\lambda_{1}^{-1})^{\frac{1}{p-1}}(x)d\mu(x)\right)^{p-1}\frac{1}{\mu(B)}\int_{B}\lambda_{1}(x)d\mu(x)\right)^{\frac{r}{p}}\\
&\leq& [\lambda_{1}]_{A_p}^{\frac{r}{p}}.
\end{eqnarray*}
Last line above follows as we have $\lambda_{1}\in A_p$. Hence we have our claim that $\lambda_{1}^{\frac{-r}{p}} \in A_r$. Now following the proof of Muckenhoupt's Theorem as in \cite[Theorem I]{cr1} we get the following
\begin{align*}
{1\over \mu(B)}\int_{B}|b(x)-b_B|^r\lambda_{1}^{-{r\over p}}(x)d\mu(x)& \leq C_{X,b}[\lambda_{1}]_{A_p}^{\frac{r}{p}} \|b\|_{{\rm BMO}_{\nu}}^{r}\int_{B}(\nu)^{r}\lambda_{1}^{-{r\over p}}(x)d\mu(x)\\
&= C_{X,b} [\lambda_{1}]_{A_p}^{\frac{r}{p}}\|b\|_{{\rm BMO}_{\nu}}^{r}\int_{B}\lambda_{2}^{\frac{-r}{p}}d\mu(x).
\end{align*}

Similarly we can show that $\lambda_{2}^{\frac{-r}{p}}\in A_{r}$ and
\begin{equation*}
\frac{1}{\mu(B)}\int_{B}\lambda_{2}^{\frac{-r}{p}}(x)d\mu(x)\left(\frac{1}{\mu(B)}\int_{B}\lambda_{2}(x)d\mu(x)\right)^{\frac{r}{p}}\leq  [\lambda_{2}]_{A_p}^{\frac{r}{p}},
\end{equation*}
So we get
\begin{equation*}
\frac{1}{\mu(B)}\int_{B}\lambda_{2}^{\frac{-r}{p}}(x)d\mu(x)\leq [\lambda_{2}]_{A_p}^{\frac{r}{p}} \left(\frac{1}{\mu(B)}\int_{B}\lambda_{2}(x)d\mu(x)\right)^{\frac{-r}{p}}.
\end{equation*}
Using Cauchy--Schwartz' inequality, we have
\begin{equation*}
 \frac{1}{\mu(B)}\int_{B}\lambda_{2}^{\frac{1}{p}}(x)d\mu(x)\frac{1}{\mu(B)}\int_{B}\lambda_{2}^{\frac{-1}{p}}(x)d\mu(x)\geq 1.
\end{equation*}
So we have
\begin{eqnarray*}
\frac{1}{\mu(B)}\int_{B}|b(x)-b_B|^{r}\lambda_{1}^{\frac{-r}{p}}d\mu(x)&\leq& C_{X,b}[\lambda_{1}]_{A_p}^{\frac{r}{p}}\|b\|_{{\rm BMO}_{\nu}}^{r}\int_{B}\lambda_{2}^{\frac{-r}{p}}(x)d\mu(x)\\
&\leq& C_{X,b}[\lambda_{1}]_{A_p}^{\frac{r}{p}}\|b\|_{{\rm BMO}_{\nu}}^{r}[\lambda_{2}]_{A_p}^{\frac{r}{p}} \left(\frac{1}{\mu(B)}\int_{B}\lambda_{2}^{\frac{1}{p}}(x)d\mu(x)\right)^{-r}\\
&\leq& C_{X,b}[\lambda_{1}]_{A_p}^{\frac{r}{p}} [\lambda_{2}]_{A_p}^{\frac{r}{p}} \|b\|_{{\rm BMO}_{\nu}}^{r} \left(\frac{1}{\mu(B)}\int_{B}\lambda_{2}^{\frac{-1}{p}}(x)d\mu(x)\right)^{r}.
\end{eqnarray*}

The proof of Lemma \ref{lem b-bQ} is complete.
\end{proof}

\section{Lower Bound of the Commutator $C_b $ }

\begin{proof}[Proof of Theorem \ref{thm main3}]

Suppose $b\in L^1_{loc}(X)$ with $\| {C}_b : L^p_{\lambda_1}(X) \rightarrow L^p_{\lambda_2}(X) \|<\infty$.

For any fixed $B\subset X$, by H\"older's inequality, we have
\begin{align}\label{lower bd1}
{1\over \nu(B)}\int_{B}|b(x)-b_B|d\mu(x)
&\lesssim \inf_{c}{1\over \nu(B)}\int_{B}|b(x)-c|d\mu(x) \nonumber\\
&\lesssim \inf_{y\in B}{1\over \nu(B)}\int_{B}|b(x)-b(y)|d\mu(x)\nonumber\\
&\lesssim  {1\over \nu(B)}{1\over \lambda_2(B)}\int_{B} \int_B|b(x)-b(y)|d\mu(x) \lambda_2(y)d\mu(y)\nonumber\\
&\lesssim  {1\over \nu(B)}{\mu(B)\over \lambda_2(B)}\int_{B}{C}_b(\chi_B)(y)   \lambda_2(y)d\mu(y)\nonumber\\
&\lesssim  {1\over \nu(B)}{\mu(B)\over \lambda_2(B)^{1\over p}}\Big(\int_{B}| {C}_b(\chi_B)(y)|^p   \lambda_2(y)d\mu(y)\Big)^{1\over p}\nonumber\\
&\lesssim  {1\over \nu(B)}{\mu(B)\over \lambda_2(B)^{1\over p}}\| {C}_b(\chi_B)\|_{L_{\lambda_2}^{p}}\nonumber\\
&\lesssim  \|  {C}_b : L^p_{\lambda_1}(X) \rightarrow L^p_{\lambda_2}(X) \| {1\over \nu(B)}{\mu(B)\over \lambda_2(B)^{1\over p}}\lambda_1(B)^{1\over p}.
\end{align}

Using (\ref{e-reverse holder}) and H\"older's inequality, we can obtain
\begin{align*}
{1\over \mu(B)}\int_{B}\lambda_1(x)d\mu(x)
&\lesssim \Big({1\over \mu(B)}\int_{B}\lambda_1(x)^{1\over 1+p}d\mu(x)\Big)^{1+p}\\
&\lesssim \Big({1\over \mu(B)}\int_{B}\nu(x)^{p\over 1+p}\lambda_2(x)^{1\over 1+p}d\mu(x)\Big)^{1+p}\\
&\lesssim \Big({1\over \mu(B)}\int_{B}\nu(x)d\mu(x)\Big)^{p}\Big({1\over \mu(B)}\int_{B}\lambda_2(x)d\mu(x)\Big) \\
&\lesssim  {\nu(B)^p\lambda_2(B)\over \mu(B)^{1+p}} .
\end{align*}
This implies that
$${1\over \nu(B)}{\mu(B)\over \lambda_2(B)^{1\over p}}\lambda_1(B)^{1\over p}\lesssim1,$$
and hence, together with \eqref{lower bd1}, gives
\begin{align*}
{1\over \nu(B)}\int_{B}|b(x)-b_B|d\mu(x)
\lesssim  \|  {C}_b : L^p_{\lambda_1}(X) \rightarrow L^p_{\lambda_2}(X) \|.
\end{align*}

Therefore, $b\in {\rm BMO}_{\nu }(X)$, and
\begin{equation*}
 \| b \| _{{\rm BMO}_{\nu }(X)} \leq C  \| {C}_b : L^p_{\lambda_1}(X) \rightarrow L^p_{\lambda_2}(X) \|.
\end{equation*}

The proof of Theorem \ref{thm main3} is complete.
\end{proof}

\bigskip
\bigskip

{\bf Acknowledgments:}
R.M. Gong is supported by the State Scholarship Fund of China (No. 201908440061).
Q.Y. Wu is supported by the Natural Science Foundation of China
(Grant No. 12091197) and the Natural Science Foundation of Shandong
Province (Grant Nos. ZR2019YQ04 and 2020KJI002).

\bigskip

\medskip

Ruming Gong, School of Mathematical Sciences, Guangzhou University, Guangzhou, China.

\smallskip

{\it E-mail}: \texttt{gongruming@gzhu.edu.cn}

\vspace{0.3cm}

Qingyan Wu, Department of Mathematics, Linyi University,
         Shandong, 276005, China.

\smallskip

{\it E-mail}: \texttt{wuqingyan@lyu.edu.cn}

\vspace{0.3cm}

Manasa N. Vempati,
Department of Mathematics,
Washington University in St. Louis,
One Brookings Drive,
St. Louis, MO USA 63130-4899.

\smallskip
{\it E-mail}: \texttt{m.vempati@wustl.edu}

\end{document}